\documentclass[a4paper,12pt]{amsart}
\usepackage{amssymb}
\usepackage{ifthen}
\usepackage{graphicx}
\usepackage{color,xcolor}
\usepackage{mathrsfs}
\usepackage{booktabs}
\numberwithin{equation}{section}
\setlength{\textwidth}{16cm} \setlength{\oddsidemargin}{0cm}
\setlength{\evensidemargin}{0cm} \setlength{\footskip}{40pt}
\pagestyle{plain}

\newtheorem{theorem}{Theorem}
\newtheorem{lemma}{Lemma}
\newtheorem{corollary}{Corollary}
\newtheorem{defn}{Definition}
\newtheorem{example}{Example}

\newtheorem{claim}{Claim}

\newtheorem{conj}{Conjecture}
\newtheorem{prob}{Problem}
\newtheorem{remark}{Remark}

\newenvironment{pf}[1][]{%
 \vskip 1mm
 \noindent
 \ifthenelse{\equal{#1}{}}%
  {{\slshape Proof. }}%
  {{\slshape #1.} }%
 }%
{\qed\medskip}

\newcounter{alphabet}
\newcounter{tmp}


\makeatletter
\newcommand{\Ref}[1]{\@ifundefined{r@#1}{}{\setcounter{tmp}{\ref{#1}}\Alph{tmp}}}
\makeatother

\newcommand{\ID}{{\mathbb D}}





\def\be{\begin{equation}}
\def\ee{\end{equation}}

\newcommand{\bee}{\begin{enumerate}}
\newcommand{\eee}{\end{enumerate}}

\newcommand{\blem}{\begin{lem}}
\newcommand{\elem}{\end{lem}}
\newcommand{\bthm}{\begin{thm}}
\newcommand{\ethm}{\end{thm}}
\newcommand{\bcor}{\begin{cor}}
\newcommand{\ecor}{\end{cor}}
\newcommand{\beg}{\begin{example}}
\newcommand{\eeg}{\end{example}}
\newcommand{\begs}{\begin{examples}}
\newcommand{\eegs}{\end{examples}}
\newcommand{\bdefe}{\begin{defn}}
\newcommand{\edefe}{\end{defn}}
\newcommand{\bprob}{\begin{prob}}
\newcommand{\eprob}{\end{prob}}
\newcommand{\bques}{\begin{ques}}
\newcommand{\eques}{\end{ques}}
\newcommand{\bei}{\begin{itemize}}
\newcommand{\eei}{\end{itemize}}

\newcommand{\bde}{\begin{deter}}
\newcommand{\ede}{\end{deter}}
\newcommand{\bca}{\begin{case}}
\newcommand{\eca}{\end{case}}
\newcommand{\bcl}{\begin{claim}}
\newcommand{\ecl}{\end{claim}}
\newcommand{\bcon}{\begin{conj}}
\newcommand{\econ}{\end{conj}}
\newcommand{\bcons}{\begin{conjs}}
\newcommand{\econs}{\end{conjs}}
\newcommand{\bprop}{\begin{propo}}
\newcommand{\eprop}{\end{propo}}
\newcommand{\br}{\begin{rem}}
\newcommand{\er}{\end{rem}}
\newcommand{\brs}{\begin{rems}}
\newcommand{\ers}{\end{rems}}
\newcommand{\bo}{\begin{obser}}
\newcommand{\eo}{\end{obser}}
\newcommand{\bos}{\begin{obsers}}
\newcommand{\eos}{\end{obsers}}
\newcommand{\bpf}{\begin{pf}}
\newcommand{\epf}{\end{pf}}
\newcommand{\ba}{\begin{array}}
\newcommand{\ea}{\end{array}}
\newcommand{\beq}{\begin{eqnarray}}
\newcommand{\beqq}{\begin{eqnarray*}}
\newcommand{\eeq}{\end{eqnarray}}
\newcommand{\eeqq}{\end{eqnarray*}}

\newcounter{minutes}\setcounter{minutes}{\time}
\divide\time by 60
\newcounter{hours}\setcounter{hours}{\time}
\multiply\time by 60 \addtocounter{minutes}{-\time}

\begin{document}
\title[The spherical metric and univalent harmonic mappings]{The spherical
metric and univalent harmonic mappings}

\thanks{
File:~\jobname .tex,
          printed: \number\day-\number\month-\number\year,
          \thehours.\ifnum\theminutes<10{0}\fi\theminutes}

\author[Y. Abu Muhanna]{Yusuf Abu Muhanna}
\address{Y. Abu Muhanna, Department of Mathematics, American University of
Sharjah, UAE-26666.}
\email{ymuhanna@aus.edu}

\author[R. M. Ali]{Rosihan M. Ali}
\address{R. M. Ali, School of Mathematical Sciences, Universiti Sains Malaysia, 11800
USM Penang, Malaysia.}
\email{rosihan@usm.my}

\author[S. Ponnusamy]{Saminathan Ponnusamy
}
\address{S. Ponnusamy, Department of Mathematics,
Indian Institute of Technology Madras, Chennai-600 036, India.
}

\email{samy@isichennai.res.in, samy@iitm.ac.in}

\subjclass[2010]{Primary 30C35; Secondary 30C25, 30C45, 30F45, 31A05}

\keywords{Harmonic univalent map, subordination, spherical area, hyperbolic
metric, hyperbolic domain, modular function.
\\
{\tt The article is published in Monatshefte f\"{u}r Mathematik, and in this version minor 
corrections are carried out.
}
}

\begin{abstract}
Let $f=h+\overline{g}$ be a harmonic univalent map in the unit disk
$\mathbb{D}$, where $h $ and $g$ are analytic. We obtain an improved estimate for the second coefficient
of $h$. This indeed is the first qualitative improvement after the appearance of the
papers by Clunie and Sheil-Small in 1984, and by Sheil-Small in 1990.
Also, when the sup-norm of the dilatation is
less than $1$, it is shown that the spherical area of the covering surface
of $h$ is dominated by the spherical area of the covering surface of $f.$
\end{abstract}

\maketitle \pagestyle{myheadings}
\markboth{Y. Abu Muhanna,  R. M. Ali, and S. Ponnusamy}{Spherical metric and harmonic mappings}

\section{Introduction and Preliminaries}\label{AAP1Sec1}

The famous Bieberbach conjecture of 1916 relates to the class $\mathcal S$ of normalized univalent analytic functions
$f(z)=z+\sum_{n=2}^{\infty }a_{n}z^{n}$ defined on the open unit
disk $\mathbb{D}=\{z\in \mathbb{C}:\,|z|<1\}$. The conjecture asserts that $|a_n|\leq n$ for every $f\in {\mathcal S}$ and every $n\geq 2$.
In 1984, de Branges proved this conjecture as well as some other stronger conjectures. Bieberbach's coefficient conjecture
was instrumental in the development of the theory of univalent functions. Numerous methods evolved and applied to investigate
a number of extremal problems in geometric function theory. Yet there still exist many open problems and conjectures involving both univalent and non-univalent mappings. The Keobe
function $k(z)=1/(1-z)^2$ and its rotations $e^{-i\theta}k(e^{-i\theta}z)$ provide solutions to many extremal problems in the
class $\mathcal S$ and related geometric subclasses. These include the class of functions that are close-to-convex, starlike, or convex in some direction (see \cite{PD1,ZN1,CP}).

Another active topic studied in recent years is on planar harmonic mappings (see for instance \cite{BshHeng05,Clunie-Small-84,PD2} and the mini survey \cite{SaRa2013}). The present paper investigates such mappings. Specifically, we treat the
family ${\mathcal{S}}_{H}$ of normalized univalent, sense-preserving
harmonic mappings $f=h+\overline{g}$ in $\mathbb{D}$, where
\begin{equation}\label{harm}
f(z)=z+\sum_{n=2}^{\infty}a_{n}z^{n}+\overline{\sum_{n=1}^{\infty}b_{n}z^{n}}.
\end{equation}
Here a mapping $f=h+\overline{g}$ is sense-preserving if the Jacobian $J_{f}(z)= |h'(z)|^{2} -|g'(z)|^{2}$ of
$f$ is positive in ${\mathbb{D}}$. Set
$${\mathcal{S}}_{H}^{0}=\{f=h+\overline{g}\in {\mathcal{S}}_{H}:\, b_1=g'(0)=0\}
$$
so that each $f\in{\mathcal{S}}_{H}^{0}$ has the form \eqref{harm} with $b_1=g'(0)=0$.
An important member of this family is the so-called harmonic Koebe
mapping $K$ given by
\begin{equation}  \label{harm-koebe1}
K(z)=H(z)+\overline{G(z)}=\mathrm{Re}\left( \frac{z+\frac{1}{3}{z}
^{3}}{(1-z)^{3}}\right) +i\mathrm{Im}\left( \frac{z}{(1-z)^{2}}\right) .
\end{equation}
The classes ${\mathcal{S}}_{H}$ and ${\mathcal{S}}_{H}^{0}$ are known to be
normal \cite{PD2} with respect to the topology of uniform convergence on
compact subsets of ${\mathbb{D}}$. However only ${\mathcal{S}}_{H}^{0}$ is
compact.  In 1984, as a generalization of Bieberbach conjecture, Clunie and Sheil-Small \cite{Clunie-Small-84}
investigated the class ${\mathcal{S}}_{H}^{0}$ and conjectured that if
$f=h+\overline{g}\in{\mathcal{S}}_{H}^{0}$ is given by \eqref{harm}, then for all $n\geq2$,
\begin{equation*}
|a_{n}|\leq\frac{(n+1)(2n+1)}{6},~|b_{n}|\leq\frac{(n-1)(2n-1)}{6} ~\text{and%
} ~\big||a_{n}|-|b_{n}|\big|\leq n
\end{equation*}
with equality occurring for $f(z)=K(z)$ given by \eqref{harm-koebe1}. This conjecture has been verified for a few subclasses of
$\mathcal{S}_{H}^{0}$, namely the class of all functions starlike, close-to-convex, typically real and convex in one direction, in which
$K$ plays the role of extremal function in these subfamilies. It is surprising that the sharp bound for $|a_2|$,
$f=h+\overline{g}\in{\mathcal{S}}_{H}^{0}$, remains unsolved.

In \cite{Clunie-Small-84}, it was shown that $|a_{2}|<12172,$ which later in
\cite{Sheil90} was improved to $|a_{2}|<57$. In \cite{PD2},
the estimate $|a_{2}|<49$ was established, which is far from the conjectured bound
$|a_{2}|\leq5/2$. The field has not seen any further improvements on this problem. The right tools have not been found to deal with this problem and hence, the
above coefficient conjecture in the case of harmonic mappings remains elusive, even in the case of $|a_{2}|$. One of our aims is to consider this problem and prove the following result as a consequence of our new approach.

\begin{theorem} \label{AAP1prop3}
If $f=h+\overline{g}\in {\mathcal{S}}_{H}^{0}$, then $|a_{2}|\leq 20.9197$.
\end{theorem}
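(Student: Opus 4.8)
The plan is to deduce the bound from the spherical-metric machinery of the paper applied to the analytic part $h$, after a routine normalisation that makes the spherical-area comparison (which requires the dilatation to have sup-norm less than $1$) available. For $f=h+\overline{g}\in{\mathcal S}_{H}^{0}$ with dilatation $\omega=g'/h'$ and $0<r<1$, set $f_{r}(z)=r^{-1}f(rz)=h_{r}(z)+\overline{g_{r}(z)}$. Then $f_{r}\in{\mathcal S}_{H}^{0}$, its dilatation $\omega_{r}(z)=\omega(rz)$ satisfies $\|\omega_{r}\|_{\infty}=\max_{|z|\le r}|\omega(z)|<1$, and the second Taylor coefficient of $h_{r}$ equals $r\,a_{2}$. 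Hence it suffices to bound $|a_{2}(h_{r})|$ uniformly in $r$ and let $r\to1$.

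Next, since $\|\omega_{r}\|_{\infty}<1$, the spherical-area comparison gives that the spherical area of the covering surface of $h_{r}$ is at most that of the covering surface of $f_{r}$, which in turn is at most the spherical area of $\widehat{\mathbb C}$ because $f_{r}$ is univalent; normalising the sphere to area $1$,
\[
\frac{1}{\pi}\int_{\mathbb D}\frac{|h_{r}'(z)|^{2}}{(1+|h_{r}(z)|^{2})^{2}}\,dA(z)\ \le\ 1 .
\]
Since $f_{r}$ is sense-preserving, $h_{r}'$ never vanishes, so $h_{r}$ is locally univalent. Feeding this area bound, the local univalence, and the (uniform, since ${\mathcal S}_{H}^{0}$ is compact) growth/distortion estimates for analytic parts of maps in ${\mathcal S}_{H}^{0}$ into a spherical Schwarz--Pick argument, one obtains a finite constant $M_{0}$ — essentially a bound for $|h'(z)|(1-|z|^{2})/(1+|h(z)|^{2})$ on ${\mathbb D}$ — such that, integrating along hyperbolic geodesics,
\[
\arctan|h(z)|\ \le\ M_{0}\,\arctanh|z|\qquad\bigl(|z|<\tanh(\pi/2M_{0})\bigr),
\]
valid for every analytic part $h$ of a map in ${\mathcal S}_{H}^{0}$ (pass to the limit $h_{r}\to h$). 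Equivalently $|h(z)|\le\tan(M_{0}\,\arctanh|z|)$ on that disc.

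The conclusion is then immediate: for $\rho<\tanh(\pi/2M_{0})$ a Cauchy estimate over $|z|=\rho$ gives $|a_{2}|\le\rho^{-2}\max_{|z|=\rho}|h(z)|\le\rho^{-2}\tan\!\bigl(M_{0}\,\arctanh\rho\bigr)$; minimising the right-hand side over $\rho$ and optimising the constants that enter $M_{0}$ produces $|a_{2}|\le 20.9197$.

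The difficulty is concentrated entirely in the production of $M_{0}$. A bare spherical-area bound cannot control a single Taylor coefficient — for an arbitrary locally univalent normalised function the second coefficient is unbounded — so the structural content that $h$ is the analytic part of a \emph{univalent} harmonic map must genuinely be used, and used quantitatively: the improvement from the previously known $49$ to $20.9197$ is purely a matter of running these estimates sharply. Pinning down the distortion constant $M_{0}$, and seeing clearly which ingredient — the area comparison, the sense-preserving hypothesis, the compactness of ${\mathcal S}_{H}^{0}$, or a known growth bound for $h$ — carries the weight, is the heart of the argument.
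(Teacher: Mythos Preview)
Your proposal misidentifies the mechanism behind the bound and leaves the central step unproved. The paper does \emph{not} use the spherical-area comparison of Section~\ref{AAP1Sec3-area} to prove Theorem~\ref{AAP1prop3}; that comparison is an independent result. The actual proof rests on Lemma~\ref{AAP1lem2}, which in turn combines Nehari's theorem on the elliptic modular function (Lemma~\ref{AAP1lem2a}) with subordination to $\exp(\alpha z/(1-z))$. Concretely: if $h$ omits a disk ${\mathbb D}(c,r)$ touching $\partial h({\mathbb D})$, then $z(c-h(z))/c$ lies in the Nehari class $\mathcal F$, which forces $r/|c|\ge 1/16$; this pins down $\alpha=2\log(|c|/r)<8\log 2$ and hence $|a_2/c|\le\alpha(\alpha+2)/2<8\log 2\,(4\log 2+1)\approx 20.9197$. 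Passing to $h_{\rho_n}$ makes the image bounded, so that such an omitted disk with $|c|<1$ exists (the Remark preceding the proof). The specific constant is therefore not an optimisation artefact but the explicit value $8\log 2\,(4\log 2+1)$, arising from two concrete ingredients: Nehari's $1/16$ and the second Taylor coefficient of $e^{\alpha z/(1-z)}$.

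Your own argument, by contrast, reduces everything to producing a constant $M_0$ bounding the spherical derivative $|h'|(1-|z|^2)/(1+|h|^2)$, and you correctly observe that a bare area bound cannot do this. But the ingredients you list to close the gap --- compactness of $\mathcal S_H^0$, ``known growth/distortion estimates for analytic parts'' --- are either non-constructive (compactness yields \emph{some} finite bound, not $20.9197$) or circular (the available growth/distortion estimates for $h$ already depend on a bound for $|a_2|$; see \cite[pp.~96--97]{PD2}). A spherical Schwarz--Pick inequality of the type you invoke would require $h$ to be globally univalent, or at least normal with an explicit normality constant, and neither follows from finite spherical area plus local univalence. So the ``heart of the argument'' you defer is in fact the entire argument; the route you sketch does not reach it, and in particular gives no account of where the number $20.9197$ comes from.
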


The proof of Theorem \ref{AAP1prop3} is presented in Section \ref{AAP1Sec2}. It requires several other basic results
which will be discussed in Section \ref{AAP1Sec1a}.

For $f=h+\overline{g}\in{\mathcal{S}}_{H}^{0}$ given by \eqref{harm}, it was shown in \cite{ML} that the analytic part $h$ of
$f$ lies in Hardy spaces $H^{p}$ for some
small $p >0$, namely, $0<p<(2\alpha_0+2)^{-2}$ with $\alpha_0=\sup_{{\mathcal{S}}_{H}}|a_{2}|$. Thus, determining sharp estimate
for $|a_2|$ is an important
problem. On the other hand, while the bound in Theorem \ref{AAP1prop3} may not be sharp, it is indeed a better estimate than the
known upper bound of $48.4$ (see \cite[p.~96--97]{PD2}). We also note that as an attempt to solve the above conjecture,
the following new conjecture was proposed in \cite{PonSai5}.

\begin{conj}
\label{PS5conj2} If $\mathcal{S}^0_H(\mathcal{S}) = \{h+\overline{g} \in
\mathcal{S}^0_H :\, \Phi_\theta =h+e^{i \theta}g \in \mathcal{S}~
\mbox{for
some}~~\theta \in \mathbb{R}\}$, then $\mathcal{S}^0_H = \mathcal{S}^0_H(%
\mathcal{S})$. That is, for every function $f=h+\overline{g} \in \mathcal{S}%
^0_H$, there exists a $\theta \in \mathbb{R}$ such that $\Phi_\theta =h +
e^{i\theta} g \in \mathcal{S}$.
\end{conj}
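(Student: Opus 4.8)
The plan is to work with the analytic shears $\Phi_\theta=h+e^{i\theta}g$ directly and to show that the set of \emph{bad} parameters $\theta$---those for which $\Phi_\theta$ fails to be univalent---cannot fill all of $[0,2\pi)$. The starting point is the dilatation bound. Since $f=h+\overline g$ is sense-preserving, the analytic dilatation $\omega=g'/h'$ satisfies $|\omega(z)|<1$ on $\mathbb{D}$, and
\[
\Phi_\theta'(z)=h'(z)\bigl(1+e^{i\theta}\omega(z)\bigr),
\]
so $\operatorname{Re}\bigl(1+e^{i\theta}\omega\bigr)\ge 1-|\omega|>0$ and hence $\Phi_\theta'\neq0$. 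Thus every $\Phi_\theta$ is locally univalent and sense-preserving. I would first record the one case that is already settled: if $f(\mathbb{D})$ is convex in some direction, the classical shear theorem of Clunie and Sheil-Small \cite{Clunie-Small-84} identifies a corresponding analytic shear as a member of $\mathcal{S}$ (for the horizontal direction, $\theta=\pi$, so that $\Phi_\pi=h-g\in\mathcal{S}$); this both confirms the conjecture on a large subclass and fixes the geometric meaning of the parameter $\theta$.

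Next I would translate global univalence into a statement about difference quotients. Writing $A=h(z_1)-h(z_2)$ and $B=g(z_1)-g(z_2)$ for $z_1\neq z_2$, the map $\Phi_\theta$ fails to be injective exactly when $A+e^{i\theta}B=0$ for some such pair, while univalence of $f$ says $A+\overline B\neq0$ for all such pairs. A collision $A+e^{i\theta}B=0$ forces $|A|=|B|$, so only pairs on the level set $\{|A|=|B|\}$ can ever obstruct some $\Phi_\theta$, and on that set the obstructed parameter is $\theta=\arg A-\arg B+\pi$. The crucial first estimate is that collisions stay away from the diagonal: if $z_1\to z_2\to z_*$ then $A/B\to h'(z_*)/g'(z_*)=1/\omega(z_*)$, whose modulus exceeds $1$, so $|A|>|B|$ near the diagonal and no collision is possible there. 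Hence the collision set is a closed subset of $(\mathbb{D}\times\mathbb{D})\setminus\{\text{diagonal}\}$, bounded away from it on compact subsets.

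The goal is then to prove that the projection of the collision set to the $\theta$-circle omits at least one point. I would attempt this by a degree/argument-principle argument run simultaneously over $\theta$: for $r<1$ consider the closed curves $t\mapsto\Phi_\theta(re^{it})$, show that their self-intersection number is a well-behaved (upper semicontinuous, integer-valued) function of $(\theta,r)$, and use the diagonal estimate together with the univalence of $f$ to force that number down to $0$ for at least one $\theta$. Concretely, univalence of $f$ removes, for each offending pair, the single parameter associated with the ``harmonic'' collision $A+\overline B=0$; the task is to leverage this one-parameter family of exclusions, via the global topology of $f$ as a homeomorphism of $\mathbb{D}$, into the omission of a genuine interval of $\theta$.

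The main obstacle is precisely the passage from local to global univalence uniformly in $\theta$, and within it the control of collisions escaping to $\partial\mathbb{D}$. The diagonal estimate handles interior degenerations cleanly, but a collision sequence for $\Phi_{\theta_n}$ with $\theta_n\to\theta_0$ may have its point-pairs tending to the boundary, where $h$ and $g$ need not extend continuously; so I expect the argument to require either a boundary regularity hypothesis (e.g.\ $f(\mathbb{D})$ a Jordan domain, where $f|_{\partial\mathbb{D}}$ is a simple closed curve and the shears can be compared on the boundary) or a quantitative replacement for it coming from a sup-norm bound $\|\omega\|_\infty<1$, which forces a definite gap $|A|>c|B|$ on a neighborhood of the diagonal that may then propagate to a global separation. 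The deeper conceptual difficulty is that the exclusion furnished by univalence of $f$ and the exclusion needed for univalence of $\Phi_\theta$ involve $\arg A+\arg B$ and $\arg A-\arg B$ respectively; decoupling these, that is, showing that the geometry of the level set $\{|A|=|B|\}$ cannot conspire to make $\arg A-\arg B+\pi$ surjective onto $[0,2\pi)$, is where the real work---and the likely need for a genuinely new idea beyond the shear theorem---resides.
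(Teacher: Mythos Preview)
The statement you are addressing is labeled \textbf{Conjecture} in the paper, not a theorem; the authors merely record it (attributing it to \cite{PonSai5}) as an open problem motivated by the coefficient conjecture, and offer no proof or proof sketch anywhere in the article. So there is no ``paper's own proof'' to compare against.

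Your write-up is consistent with this: you title it a \emph{proof strategy}, and you explicitly identify the step that does not go through---controlling collision pairs that escape to $\partial\mathbb{D}$ and, more fundamentally, showing that the map $(z_1,z_2)\mapsto \arg A-\arg B+\pi$ on the level set $\{|A|=|B|\}$ is not surjective onto the circle. These are genuine gaps, not technicalities: the diagonal estimate and local univalence of $\Phi_\theta$ are easy and well known, but neither the degree argument you sketch nor the boundary-regularity or $\|\omega\|_\infty<1$ hypotheses you suggest are known to close the global-univalence step. In particular, assuming $\|\omega\|_\infty<1$ gives a uniform gap $|A|>c|B|$ only near the diagonal, not globally, so it does not by itself ``propagate to a global separation'' as you hope. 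As it stands, your outline is a reasonable reformulation of the problem in terms of the collision set, but it does not constitute a proof, and the paper does not claim one either.
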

The present article is organized as follows. Section \ref{AAP1Sec1a} is devoted to establishing key ideas which lead to the
proof of Theorem \ref{AAP1prop3}.
In Section \ref{AAP1Sec2}, normalized conformal maps are studied in
relation to the elliptic modular function $Q$ on $\mathbb{D}$.
We show in Theorem \ref{AAP1th2} and Corollary \ref{AAP1cor2}, a new estimate on the second coefficient is obtained for functions belonging to a
certain class of conformal mappings.    In Section
\ref{AAP1Sec3-area}, specifically in Theorem \ref{AAP1prop2}, we show that for a
$K$--quasiconformal univalent harmonic map $f=h+\overline{g}$ (that is,
$|f_z|\leq \alpha |f_{\overline{z}}|$ a.e. on ${\mathbb{D}}$, where $\alpha =(K-1)/(K+1)$ with $K\geq 1$), the spherical area $A_{s}(h)$ is
dominated by the spherical area $A_{s}(f),$ and that it is finite. Finally, distortion estimates are obtained in Section \ref{AAP1Sec4} for univalent harmonic mappings.

\section{Background for the proof of Theorem \ref{AAP1prop3}\label{AAP1Sec1a}}
A domain in the complex plane $\mathbb{C}$ is said to be hyperbolic if its
complement contains at least two points. Let $\Omega $ be a hyperbolic domain in
$\mathbb{C}$. Then
the planar uniformization theorem assures the existence of a unique
conformal universal covering $f:\,\mathbb{D}\rightarrow \Omega $ with a
prescribed value $f(0) \in \Omega $, and $f^{\prime }(0)>0$. In the sequel,
$f$ is conformal provided $f^{\prime }(z)\neq 0$ in $\mathbb{D}$. When
$\Omega $ is not a simply connected hyperbolic domain, then the universal
covering $f$ cannot be univalent (see, for example, \cite[p. 41]{BM}).
For a hyperbolic domain $\Omega$ in $\mathbb{C}$, let $d(w,\partial \Omega )$
denote the Euclidean distance between $w\in \Omega $ and
the boundary $\partial \Omega$ of $\Omega $.

The well-known principle of subordination defined by Littlewood in \cite{LW}
will be referred to in this article. For two analytic functions $f$ and $g$
in the unit disk $\mathbb{D}$, the function $f$ is \textit{subordinate} to
$g $, written as $f(z)\prec g(z)$ or $f\prec g$, if there exists an analytic
self-map $\varphi$ of $\mathbb{D}$ with $\varphi (0)=0$ satisfying $f=g\circ
\varphi$ (see also \cite{PD1,ZN1}). When $g$ is univalent in
${\mathbb{D}} $, $f\prec g$ if and only if $f({\mathbb{D}})\subset g({\mathbb{D}})$ and
$f(0)=g(0)$.

When $F:\,D\rightarrow \Omega$ is a universal covering satisfying $f(0)=F(0)$, then
$\varphi (z)=F^{-1}(f(z))$ has a branch at $0,$ which by the
Monodromy theorem can be continued to all of $\mathbb{D}$. Hence $f$ is
subordinate to $F$.

For our purpose, we shall consider the modular function $Q$
\begin{equation}\label{Q}
Q(z) =16z\prod\limits_{n=1}^{\infty }\left( \frac{1+z^{2n}}{1-z^{2n-1}}
\right)^{8} =\sum_{n=1}^{\infty}A_{n}z^{n}, \quad z\in\mathbb{D}.
\end{equation}
From the product expansion of $Q$, we see that the  Taylor coefficients
$A_n $ of $Q$ are all non-negative for $n\geq 1$ with $A_1=16$.

Properties of $Q$  have been comprehensively studied by
Nehari in \cite{ZN2} (see also \cite{ZN1}) in which the author used the
notation $J(z):=-Q(-z)$. Moreover, the fact that the coefficients $\{A_{n}\}$ of $Q$ are nonnegative and form a
non-decreasing convex sequence leads to the following useful result.

\begin{lemma} \label{lem1}
Suppose that $Q(z)=\sum_{n=1}^{\infty }A_{n}z^{n}$ is given by \eqref{Q} and $f(z)=\sum_{n=1}^{\infty }a_{n}z^{n}$ analytic in
$\mathbb{D}$ satisfy $f(z)\prec Q(z)$ for $z\in {\mathbb{D}}$.
Then $|a_{n}|\leq A_{n}$ for $n\geq 1$.
\end{lemma}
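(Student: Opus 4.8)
The plan is to exploit the hypothesis that the coefficient sequence $\{A_n\}$ of $Q$ is nonnegative, nondecreasing, and convex, and to combine this with the subordination $f\prec Q$ via the classical inequality of Rogosinski on subordinate functions. Recall that if $f\prec Q$ in $\mathbb{D}$, then for each fixed $r\in(0,1)$ the partial sums satisfy $\sum_{k=1}^{n}|a_k|^2 r^{2k}\le \sum_{k=1}^{n}|A_k|^2 r^{2k}$; more to the point, Rogosinski's theorem gives the majorant-type estimate that the Taylor coefficients of $f$ are ``dominated'' by those of $Q$ whenever the coefficients of $Q$ form a suitable monotone/convex sequence. So the first step is to recall the precise statement: if $\Phi(z)=\sum A_n z^n$ has nonnegative coefficients forming a nondecreasing convex sequence, then $f\prec\Phi$ implies $|a_n|\le A_n$ for all $n\ge1$.

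First I would reduce to the case $n=1$, which is immediate: writing $f=Q\circ\varphi$ with $\varphi(0)=0$, we get $a_1=f'(0)=Q'(0)\varphi'(0)=A_1\varphi'(0)$, and $|\varphi'(0)|\le1$ by Schwarz's lemma, so $|a_1|\le A_1$. For $n\ge2$, the plan is to use the integral/Parseval approach combined with the positivity of $A_n$. Concretely, write $f(z)=Q(\varphi(z))$ and expand; the coefficient $a_n$ is a polynomial in the coefficients of $\varphi$ with coefficients drawn from $\{A_k\}_{k\le n}$. The key structural input is a lemma (essentially due to Rogosinski, and rederived by Nehari in the $Q$/$J$ setting) stating that convexity and monotonicity of $\{A_n\}$ force the worst case to be $\varphi(z)=z$. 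One clean route: use the fact that a nonnegative nondecreasing convex sequence can be written as a nonnegative combination $A_n=\sum_{j\ge1} c_j\min(n,j)$-type building blocks, or more usefully, that $Q$ is \emph{convex in the direction of the real axis} / has the property that $\operatorname{Re}(Q(z)/z)$ or a related quantity is controllable; then subordination to such a function transfers coefficient bounds. I would cite Nehari's treatment of $J(z)=-Q(-z)$ in \cite{ZN2} for the precise majorization property, since the hypothesis ``nondecreasing convex sequence'' is exactly what is invoked there.

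The main obstacle is establishing the coefficient-majorization principle itself — that $f\prec\Phi$ together with $\{A_n\}$ nonnegative, nondecreasing, and convex implies $|a_n|\le A_n$. This is not true for arbitrary majorants (subordination alone only controls the sum $\sum|a_k|^2r^{2k}$, not individual coefficients), so the convexity hypothesis must be used in an essential way. The standard device is Rogosinski's lemma: if $f\prec\Phi$, then for every $n$, $a_n=\sum_{k=1}^{n} A_k P_k$ where $P_k=P_k(\varphi)$ are the Faber-type polynomials in the coefficients of $\varphi$ satisfying a uniform bound coming from the Herglotz representation of $\varphi$, and then Abel summation against the convex sequence $\{A_k\}$ collapses the estimate. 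I would carry out the Abel summation step carefully: write $A_k$ in terms of its second differences $\Delta^2 A_k\ge0$ and the boundary terms, substitute, and use the elementary bound $|\sum_{k=1}^{m}P_k|\le$ (something $\le m$) valid for each $m$ by Schwarz--Pick applied to $\varphi$. Summing, the convexity weights are nonnegative and the total reduces to $A_n$ exactly, with equality forced by $\varphi(z)=z$.

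Finally, I would remark that equality $|a_n|=A_n$ holds precisely when $\varphi$ is a rotation (and in fact the identity, given the normalization), so the bound is sharp; this sharpness is what makes the lemma useful downstream when comparing $|a_2|$ of $h$ against $A_2$ of $Q$. I expect the write-up to be short once the Rogosinski--Nehari majorization is quoted; the only genuinely delicate point is verifying that the convexity of $\{A_n\}$ — as opposed to mere monotonicity — is what is needed, and I would double-check this against the product formula \eqref{Q}, from which nonnegativity of $A_n$ is visible directly and the convexity follows from Nehari's analysis in \cite{ZN2}.
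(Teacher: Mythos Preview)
Your proposal is correct and follows essentially the same approach as the paper: observe (via Nehari \cite{ZN2}) that the coefficients $\{A_n\}$ of $Q$ form a nonnegative, nondecreasing, convex sequence, and then invoke Rogosinski's subordination theorem \cite{Rogo43} to conclude $|a_n|\le A_n$. The paper's proof simply cites these two facts and is done in three lines; your sketch additionally outlines the Abel-summation-on-second-differences mechanism behind Rogosinski's result, which is more than the paper provides but not a different route.
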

\begin{proof}
The assertion follows from \cite{LW, ZN2, Rogo43}. Indeed, it is known from \cite[p. 82]{ZN2} that $\{A_{n}\}$ is a convex
non-decreasing sequence, that is, $B_n=A_n-A_{n-1}$ and $C_n=B_n-B_{n-1}$ are
non-negative, where $A_0=0=A_{-1}$. Since $f\prec Q$, it
readily follows from a theorem of Rogosinski \cite{Rogo43} that $|a_{n}|\leq
A_{n}$ for $n\geq 1$ (see also Littlewood \cite[p. 169]{LW}).
\end{proof}

Let ${\mathcal F}$ denote the class of all analytic functions in $\mathbb{D}$ of the form
$f(z)=\sum_{n=1}^{\infty }a_{n}z^{n}$ that assumed the value $0$ only at $0$.
Clearly the elliptic modular function $J$ and $Q(z):=-J(-z)$ belong to $\mathcal{F}$.

\begin{lemma} {\rm \cite{ZN1}}\label{AAP1lem2a}
If $f\in {\mathcal F}$, $D=f(\ID)$ and $a=d(0,\partial D)$, then $f(z)\prec aQ(z).$
\end{lemma}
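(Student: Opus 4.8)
The plan is to reduce the statement to a subordination to the full modular function and then invoke the classical covering-type estimate. First I would normalize: since $f \in \mathcal{F}$ omits the value $0$ only at the origin and $a = d(0,\partial D)$ is the Euclidean distance from $0$ to $\partial D = \partial f(\mathbb{D})$, the disk $\{|w| < a\}$ is contained in $D$, but more importantly $D$ omits at least the point $0$ (attained only at the origin) — so $f/a$ is a function whose image $D/a$ contains the unit disk and whose complement is a hyperbolic set. The key classical fact I would use is that the modular function $Q$ (equivalently $J(z) = -Q(-z)$), being the universal covering map of $\mathbb{C} \setminus \{0,1\}$ normalized by $Q(0)=0$, $Q'(0)=16$, is extremal for the Koebe-type problem among functions omitting two values; this is precisely the content of Nehari's analysis in \cite{ZN1, ZN2}.

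The main step is the following: I want to show $f(z) \prec a Q(z)$, i.e.\ that $f = (aQ)\circ \varphi$ for some analytic self-map $\varphi$ of $\mathbb{D}$ fixing $0$. Since $aQ$ is \emph{not} univalent, I cannot argue via image inclusion directly; instead I would use the covering-space argument already spelled out in the excerpt. The function $aQ : \mathbb{D} \to \mathbb{C} \setminus \{0, 16a\}$ — wait, more carefully, $Q$ covers $\mathbb{C}\setminus\{0,1\}$, so $aQ$ covers $\mathbb{C}\setminus\{0,a\}$. The delicate point is that $f$ maps into $D$, which omits $0$ but need \emph{not} omit $a$; however, since $a = d(0,\partial D)$, the punctured disk $\{0 < |w| < a\}$ lies in $D$ and there is a boundary point of modulus exactly $a$, so one cannot immediately say $D \subseteq \mathbb{C}\setminus\{0,a\}$. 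The right move is a hyperbolic-metric comparison: by the monotonicity of the hyperbolic (Poincar\'e) density under inclusion, $\lambda_D(w) \geq \lambda_{\mathbb{C}\setminus\{0,a\}}(w)$ for $w \in D$ (using $D \subseteq \mathbb{C}\setminus\{0\}$ together with the standard lower bound for the density of a once-punctured domain in terms of distance to the puncture — this is exactly where $a = d(0,\partial D)$ enters, giving $\lambda_D(w) \geq \lambda_{\{0<|w'|<a\}}(w)$ and then the comparison with the twice-punctured plane follows from general principles in \cite{ZN1}).

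From the density inequality, applying the Schwarz--Pick lemma to the covering $f : \mathbb{D} \to D$ versus $aQ : \mathbb{D} \to \mathbb{C}\setminus\{0,a\}$ gives control on $|f'(0)|$, and more precisely, lifting $f$ through the covering $aQ$ (possible by the Monodromy theorem, exactly as in the paragraph preceding the lemma in the excerpt, since $f$ omits $0$ and has a simple zero there matching that of $Q$) produces the desired $\varphi$ with $\varphi(0) = 0$; the Schwarz lemma then forces $|\varphi'(0)| \leq 1$, and one checks the lift is well-defined globally. I expect the main obstacle to be precisely the verification that $f$ lifts to a \emph{single-valued} map $\varphi = (aQ)^{-1}\circ f$ on all of $\mathbb{D}$ — this requires that $f_*(\pi_1(\mathbb{D})) \subseteq (aQ)_*(\pi_1(\mathbb{D}))$, which is automatic since $\pi_1(\mathbb{D})$ is trivial, so in fact the monodromy argument goes through cleanly once one has chosen the branch of $(aQ)^{-1}$ at $f(0)$ sensibly; the genuine content, and the part I would cite from \cite{ZN1, ZN2} rather than reprove, is the extremality that makes $aQ$ (and not some smaller multiple) the correct majorant, i.e.\ that the puncture at $a$ is the "worst case" compatible with $a = d(0,\partial D)$.
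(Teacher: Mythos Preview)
The paper does not prove this lemma; it is stated with the citation \cite{ZN1} and used as a black box, so there is no proof in the paper to compare against. Your overall strategy---lift $f$ through the modular covering---is indeed Nehari's, but the execution has gaps that would stop the argument as written.

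First, you say repeatedly that $D=f(\mathbb{D})$ omits $0$. It does not: $f(0)=0$, so $0\in D$. What is true is $f^{-1}(0)=\{0\}$, so $f$ restricted to the \emph{punctured} disk maps into $\mathbb{C}\setminus\{0\}$; combined with the omitted nearest boundary value $b$ (with $|b|=a$) this gives $f/b:\mathbb{D}\setminus\{0\}\to\mathbb{C}\setminus\{0,1\}$. Likewise $Q$ does not cover $\mathbb{C}\setminus\{0,1\}$ from $\mathbb{D}$, since $Q(0)=0$; rather $Q:\mathbb{D}\setminus\{0\}\to\mathbb{C}\setminus\{0,1\}$ is the covering (the quotient of $\lambda:\mathbb{H}\to\mathbb{C}\setminus\{0,1\}$ by $\tau\mapsto\tau+2$), extended across the puncture.

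Consequently your dismissal of the lifting obstruction (``$\pi_1(\mathbb{D})$ is trivial'') targets the wrong domain. The lift must be built on $\mathbb{D}\setminus\{0\}$, where $\pi_1\cong\mathbb{Z}$. The point that actually makes it work is that a small loop about $0$ is sent by $f/b$ to a loop winding once about $0$ and not about $1$, and $Q$ does the same; hence $(f/b)_*(\pi_1)\subseteq Q_*(\pi_1)$, the lift $\varphi:\mathbb{D}\setminus\{0\}\to\mathbb{D}\setminus\{0\}$ exists, and extends to $\varphi(0)=0$ by boundedness and $Q^{-1}(0)=\{0\}$.

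Finally, the hyperbolic-metric comparison you sketch is neither needed nor sufficient: once the lift exists, $f/b=Q\circ\varphi$ with $\varphi(0)=0$ \emph{is} the subordination, with nothing further to check. (A minor point: as in Corollary~\ref{AAP1cor1}, the ``$a$'' here should be read as the nearest boundary \emph{point}; the conclusion $f\prec |b|Q$ with the real modulus would not follow, since $e^{i\theta}Q\not\prec Q$ for $\theta\not\equiv 0$.)
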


If $D$ is a region in the complex plane, denote by $D ^{c}$ its complement $\overline{\mathbb{C}} \backslash D$.
As an immediate consequence of Lemma \ref{AAP1lem2a}, here is a result which reveals an important geometric fact.
%
%

\begin{corollary}\label{AAP1cor1}
\textrm{(Compare with \cite[Theorem II]{ZN2})}
Suppose that  $h\in {\mathcal F}$, $h(z)=\sum_{n=1}^{\infty }a_{n}z^{n}$, $h(\mathbb{D})=D$,
$d(0,\partial D)=|a|$, $a\in \partial D$, and $Q(z)=\sum_{n=1}^{\infty
}A_{n}z^{n}$ is given by \eqref{Q}. Then $|a_{n}|\leq |a|A_{n}$ for $n\geq 1$.
\end{corollary}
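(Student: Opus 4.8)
The plan is to obtain this as an immediate consequence of Lemma~\ref{AAP1lem2a} and Lemma~\ref{lem1}, after a harmless rescaling. Since $h\in{\mathcal F}$ we have $0\in D=h(\mathbb{D})$, and as $D\neq\mathbb{C}$ (so that $d(0,\partial D)=|a|$ is finite and positive), the function
$$
\widetilde h(z):=\frac{h(z)}{|a|}=\sum_{n=1}^{\infty}\frac{a_{n}}{|a|}\,z^{n}
$$
again belongs to ${\mathcal F}$: it is analytic in $\mathbb{D}$ and vanishes precisely at $z=0$. Its image is $\widetilde h(\mathbb{D})=D/|a|$, whose boundary is $\partial D/|a|$, so that $d\bigl(0,\partial(D/|a|)\bigr)=|a|^{-1}\,d(0,\partial D)=1$.

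Next I would apply Lemma~\ref{AAP1lem2a} to $\widetilde h$. With the Euclidean distance from $0$ to the boundary now normalized to $1$, the conclusion of that lemma reads $\widetilde h(z)\prec Q(z)$ for $z\in\mathbb{D}$. This subordination is now in exactly the form demanded by Lemma~\ref{lem1}, with the Taylor coefficients of the subordinate function being $a_{n}/|a|$. Hence Lemma~\ref{lem1} gives $\bigl|a_{n}/|a|\bigr|\leq A_{n}$ for every $n\geq 1$, i.e.\ $|a_{n}|\leq|a|\,A_{n}$, which is the assertion.

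There is essentially no obstacle here; the only points worth verifying are that a positive dilation preserves membership in ${\mathcal F}$ and normalizes the boundary distance to $1$, both of which are immediate. If one prefers to avoid rescaling, one may instead apply Lemma~\ref{AAP1lem2a} directly to $h$ to get $h\prec|a|Q$, observe that the coefficient sequence $\{|a|A_{n}\}$ of $|a|Q$ is nonnegative, non-decreasing and convex (being a positive multiple of $\{A_{n}\}$, which has these properties by \cite[p.~82]{ZN2}), and then invoke Rogosinski's theorem \cite{Rogo43} exactly as in the proof of Lemma~\ref{lem1} to conclude $|a_{n}|\leq|a|A_{n}$. The hypothesis that the extremal boundary point $a$ actually lies in $\partial D$ is used only to make sense of writing the distance as $|a|$; it plays no further role.
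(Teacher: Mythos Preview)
Your proof is correct and follows exactly the route the paper intends: the corollary is stated as an immediate consequence of Lemma~\ref{AAP1lem2a}, and you combine that subordination with Lemma~\ref{lem1} (equivalently, Rogosinski's theorem) after a trivial rescaling. The alternative you sketch, applying Rogosinski directly to $h\prec |a|Q$ using that $\{|a|A_n\}$ inherits the convexity and monotonicity of $\{A_n\}$, is equally valid and amounts to the same argument.
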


An important question to ask is whether the coefficient estimate is sharp. At least in Theorem \ref{AAP1th2} in the next section, we present
better estimates for $a_{2}$ and $a_{3}$.

\section{Coefficient estimates for hyperbolic conformal maps}\label{AAP1Sec2}

Here is our first basic result which gives better estimates for $a_{2}$ and $a_{3}$ than the estimates given by
Corollary \ref{AAP1cor1}.

\begin{theorem} \label{AAP1th2}
Suppose that $h\in {\mathcal F}$, $h(z)=z+\sum_{n=2}^{\infty }a_{n}z^{n}$, $h(\mathbb{D})=D$, and $d(0,\partial D)=|a|$
for some $a\in \partial D$. Then $a_2$ and $a_3$ satisfy the following inequalities:
\begin{equation}
\frac{1}{16}\leq |a|,  \label{bda}
\end{equation}%
\begin{equation*}
|a_{2}|\leq 16|a|+\frac{1}{2|a|},
\end{equation*}%
and
\begin{equation}
|a_{3}|\leq 704|a|.  \label{bda-3}
\end{equation}

If in addition $D$ is hyperbolic, then $|a|<1.$
\end{theorem}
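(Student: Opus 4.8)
The plan is to leverage Lemma~\ref{AAP1lem2a} (together with its corollary) to reduce everything to coefficient comparisons against the modular function $Q(z)=\sum A_n z^n$, and then to extract the refined estimates by exploiting the extra normalization $a_1=1$.

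First I would establish \eqref{bda}. Since $h\in\mathcal F$ with $h(0)=0$, $h'(0)=1$ and $D=h(\mathbb D)$ with $d(0,\partial D)=|a|$, Lemma~\ref{AAP1lem2a} gives $h(z)\prec |a|\,Q(z)$, so by Lemma~\ref{lem1} we get $|a_n|\le |a|A_n$ for all $n\ge 1$. Taking $n=1$ and recalling $A_1=16$ yields $1=|a_1|\le 16|a|$, which is exactly $|a|\ge 1/16$. The bound $|a_3|\le 704|a|$ in \eqref{bda-3} is the case $n=3$ once one records the value $A_3$: expanding the product \eqref{Q} one finds $A_1=16$, $A_2=128$, $A_3=704$, so $|a_3|\le |a|A_3=704|a|$.

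For the refined bound on $|a_2|$, comparing coefficients of the subordination $h(z)\prec |a|Q(z)$ alone would only give $|a_2|\le 128|a|$. The improvement must come from the normalization $a_1=1$: writing $h=|a|Q\circ\varphi$ with $\varphi(z)=c_1z+c_2z^2+\cdots$ a self-map of $\mathbb D$ fixing $0$, one gets $1=a_1=|a|A_1 c_1=16|a|c_1$, hence $c_1=1/(16|a|)$, and $a_2=|a|(A_1c_2+A_2c_1^2)=16|a|c_2+128|a|c_1^2$. Since $|c_1|\le 1$ (which re-proves $|a|\ge 1/16$) and $\varphi$ is a Schwarz function, $|c_2|\le 1-|c_1|^2$. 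Substituting $c_1=1/(16|a|)$ and bounding $|c_2|$ this way gives
\[
|a_2|\le 16|a|\Bigl(1-\frac{1}{256|a|^2}\Bigr)+128|a|\cdot\frac{1}{256|a|^2}
=16|a|-\frac{1}{16|a|}+\frac{1}{2|a|}=16|a|+\frac{1}{2|a|}-\frac{1}{16|a|},
\]
which is even slightly stronger than the claimed $|a_2|\le 16|a|+\frac{1}{2|a|}$; in any case the stated inequality follows. (One should double-check the constant $A_2=128$ from the product expansion, and verify the Schwarz-lemma coefficient inequality $|c_2|\le 1-|c_1|^2$, which is standard.)

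Finally, for the hyperbolic case: if $D$ is hyperbolic then its complement contains at least two points, so in particular $\partial D$ is nonempty and the point $a\in\partial D$ with $|a|=d(0,\partial D)$ is genuinely finite; moreover $h:\mathbb D\to D$ being a conformal map onto a \emph{proper} hyperbolic subdomain, one invokes that $Q$ itself omits (for instance) the value $1$, i.e. the modular function is the covering map of a thrice-punctured sphere, so $|a|\,Q$ cannot be surjective onto all of $\mathbb C$; more directly, I would argue that if $|a|\ge 1$ the disk $\{|w|<|a|\}\subseteq D$ forces, via the relation $|a_n|\le |a|A_n$ being an \emph{equality}-characterizing situation only for the extremal $|a|Q$, a contradiction with $D$ being hyperbolic, since $Q(\mathbb D)=\mathbb C\setminus\{0\}$... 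The cleanest route, and the one I would actually take, is: hyperbolicity of $D$ implies $D\ne\mathbb C$, but if $|a|\ge 1$ then combining Koebe-type reasoning — $h$ univalent is \emph{not} assumed here, so instead I use that $d(0,\partial D)=|a|$ together with the sharp subordination forces $D\subseteq |a|Q(\mathbb D)=\mathbb C\setminus\{0\}$, and one shows strict inclusion unless $h=|a|Q$, while $h$ has the normalization $h'(0)=1\ne 16|a|$ when $|a|\ne 1/16$, ruling out the extremal; the remaining point is to exclude $|a|=1$ exactly, which follows because $d(0,\partial D)=1$ with $D$ hyperbolic and $h'(0)=1$ would saturate the estimate $|a_2|\le 16+1/2$ in a way incompatible with $D\subsetneq\mathbb C$. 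The main obstacle is precisely this last step — cleanly deducing the \emph{strict} inequality $|a|<1$ rather than $|a|\le$ (something); I expect it requires a careful use of the fact that equality in the subordination-coefficient bounds forces $h$ to be a rotation-composition of $|a|Q$, whose image $\mathbb C\setminus\{0\}$ is the "largest" hyperbolic domain, and then noting that $h'(0)=1$ pins down $|a|$ away from the only value ($1/16$) where $h$ could equal $|a|Q$ — so the remaining range must be handled by a comparison-of-radii argument showing $D$ hyperbolic cannot contain an arbitrarily large centered disk.
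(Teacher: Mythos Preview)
Your treatment of the three displayed inequalities is correct and follows essentially the paper's strategy: from Lemma~\ref{AAP1lem2a} write $h=|a|\,Q\circ\varphi$ with $\varphi(z)=c_1z+c_2z^2+\cdots$ a Schwarz function, and compare coefficients. There are two minor methodological differences worth noting. For $|a_2|$ the paper uses only the crude bound $|\beta_2|\le 1$, whereas you invoke the sharper Schur--Schwarz inequality $|c_2|\le 1-|c_1|^2$; this is why you obtain $16|a|+7/(16|a|)$, slightly better than the stated bound. For $|a_3|$ the paper does \emph{not} simply quote Corollary~\ref{AAP1cor1}; instead it writes $a_3=-16a(\beta_3+16\beta_1\beta_2+44\beta_1^3)$ and appeals to the Prokhorov--Szynal functional inequality $|\beta_3+\mu\beta_1\beta_2+\nu\beta_1^3|\le|\nu|$ (applicable since $\mu=16$, $\nu=44$ satisfy the hypotheses) to recover the constant $704$. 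Your route via Rogosinski's theorem (Corollary~\ref{AAP1cor1} with $A_3=704$) is shorter and equally valid; the two approaches yield the same bound.

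The genuine gap is the final assertion $|a|<1$ when $D$ is hyperbolic. None of your attempted arguments---extremality of $Q$, exclusion of large disks, equality cases in subordination---produces a proof, and you rightly identify this as the obstacle. The ingredient you are missing is the hyperbolic metric of $D$ itself. The paper's argument is a one-line appeal to the upper estimate in~\eqref{euchypdom}: for any hyperbolic domain one has $\lambda_D(w)\,d(w,\partial D)\le 1$, and combining this at $w=0$ with the identification $\lambda_D(0)=1/|h'(0)|=1$ coming from the normalization gives $|a|=d(0,\partial D)<1$. (The same computation is spelled out in the proof of Theorem~\ref{AAP1th3a}.) You never introduce $\lambda_D$, which is why your attempts stall; once that metric inequality is on the table the conclusion is immediate and all of your subordination-based machinery is unnecessary for this part.
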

\begin{proof}
For $\rho \in (0,1)$, let
\begin{equation*}
h_{\rho}(z)=(1/\rho)h(\rho z) =z+\sum_{n=2}^{\infty}a_{n}\rho ^{n-1}z^{n}.
\end{equation*}
We now apply Corollary \ref{AAP1cor1} for $h_{\rho}$ with $a_\rho \in \partial
D_\rho$ as the nearest point to the origin, where $D_\rho =h_\rho(\mathbb{D}%
) $. Then it follows from \eqref{Q}, Corollary \ref{AAP1cor1} and \cite[p. 327]{ZN1}
(see also Lemma \ref{AAP1lem2a}) that $h(z)\prec aQ(z)$ and therefore,
\begin{equation*}
h_{\rho}(z)=-\frac{a_\rho}{\rho} Q(\varphi(\rho z))=-\frac{16a_\rho}{\rho}%
[\varphi(\rho z)+8\varphi^{2}(\rho z)+44\varphi ^{3}(\rho z)+\cdots],
\end{equation*}
where $\varphi $ is analytic in $\mathbb{D}$ with $\varphi (0)=0$, $|\varphi (z)|<1$ in $\mathbb{D}$
and $a_\rho \to a$ as $\rho \to 1^{-}$. With
$\varphi(z)=\beta_{1}z+\beta_{2}z^{2}+\cdots$, and comparing the coefficients of $z^n$ for
$n=1,2,3$ in the last expression of $h_{\rho}$, gives the following three relations:
\begin{align*}
a_{1} & =-16a_\rho\beta_{1}=1 \\
a_{2} & =-16a_\rho (\beta_{2}+8\beta_{1}^{2}),~\mbox{ and } \\
a_{3} & =-16a_\rho (\beta_{3}+16\beta_{1}\beta_{2}+44\beta_{1}^{3}).
\end{align*}
Thus, the known estimates $|\beta_{n}|\leq1$ for $n\geq1$ readily
establish
\begin{equation*}
|a_{\rho}|\geq \frac{1}{16|\beta_{1}|}\geq \frac{1}{16},
\end{equation*}
\begin{equation*}
|\beta_{1}|=1/(16|a_{\rho}|)~\mbox{ and }~|a_{2}|\leq16|a_{\rho
}|(1+8|\beta_{1}|^{2})=16|a_{\rho}|+\frac{1}{2|a_{\rho}|}.
\end{equation*}
Note that $|a_{\rho}|\rightarrow |a|$ as $\rho\rightarrow 1^{-}.$ Clearly, the
right side of \eqref{bda} follows from the analog of Koebe one-quarter
theorem for hyperbolic domains (\cite{BP} and \cite[p. 894]{SV}). See also
the inequality recalled in \eqref{euchypdom}.

Finally, we present a proof of \eqref{bda-3}. To do this, we recall the
sharp upper bounds for the functionals $\left|\beta_3+\mu \beta_1\beta_2+\nu
\beta_1^3\right|$ when $\mu$ and $\nu $ are real. In \cite{PS}, Prokhorov
and Szynal proved among other results that
\begin{equation*}
\left|\beta_3+\mu \beta_1\beta_2+\nu \beta_1^3\right|\leq |\nu|
\end{equation*}
if $|\mu|\ge 4$ and $\nu \geq (2/3)(|\mu| -1)$. From the third relation
above for $a_3$, this condition is fulfilled (since $\mu =16$ and $\nu =44$)
and thus,
\begin{equation*}
|a_{3}| = 16|a_\rho|\, \left
|\beta_{3}+16\beta_{1}\beta_{2}+44\beta_{1}^{3}\right | \leq 16 \times 44\,
|a_\rho| =704 |a_\rho|
\end{equation*}
which proves the desired inequality \eqref{bda-3}.
\end{proof}

As $A(x)=16x +1/(2x)$ is increasing on $[1/(4\sqrt{2}),1]$, it follows that $%
A(x)\leq A(1)=16.5$. This observation leads to

\begin{corollary}
\label{AAP1cor2} If  $h\in {\mathcal F}$ and $h(z)=z+\sum_{n=2}^{\infty }a_{n}z^{n}$, then $|a_{2}|\leq 16.5$ and $|a_{3}|\leq 704$.
\end{corollary}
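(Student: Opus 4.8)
The statement to be proved is Corollary \ref{AAP1cor2}, which asserts that for $h\in\mathcal{F}$ normalized by $h(z)=z+\sum_{n\ge 2}a_nz^n$, one has $|a_2|\le 16.5$ and $|a_3|\le 704$. The plan is to derive both bounds directly from Theorem \ref{AAP1th2} by optimizing over the quantity $|a|$, where $a$ is a nearest boundary point of $D=h(\mathbb{D})$ to the origin.

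First I would invoke Theorem \ref{AAP1th2}, which gives us the three facts $|a|\ge 1/16$, $|a_2|\le 16|a|+\tfrac{1}{2|a|}$, and $|a_3|\le 704|a|$. The second ingredient is the normalization $h'(0)=1$: since $h\in\mathcal{F}$ vanishes only at the origin, $D$ is a domain containing $0$, and $h$ is a conformal map onto $D$ (a covering that is injective near $0$ with $h'(0)=1$). I would point out that $d(0,\partial D)=|a|\le 1$; indeed, if $D$ were all of $\mathbb{C}$ then $h$ would have to be a linear map (an entire univalent function), forcing $h(z)=z$ and making the coefficient bounds trivial, so we may assume $D\ne\mathbb{C}$, hence $D$ is hyperbolic and the last assertion of Theorem \ref{AAP1th2} yields $|a|<1$. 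Thus $|a|$ ranges over $[1/16,1]$, or more precisely over $[1/(4\sqrt{2}),1)$ once we also use $|a|\ge 1/(4\sqrt 2)$, which follows from combining $|\beta_1|=1/(16|a|)\le 1$ with the constraint actually used in the proof of Theorem \ref{AAP1th2}.

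For the $a_3$ bound the conclusion is immediate: $|a_3|\le 704|a|\le 704$. For the $a_2$ bound I would study the single-variable function $A(x)=16x+\tfrac{1}{2x}$ on the interval $x\in[1/(4\sqrt 2),1]$. Its derivative is $A'(x)=16-\tfrac{1}{2x^2}$, which is positive precisely when $x>1/(4\sqrt 2)$, so $A$ is increasing on this interval and attains its maximum at the right endpoint: $A(1)=16+\tfrac12=16.5$. Since $|a_2|\le A(|a|)\le A(1)=16.5$, this gives the claimed estimate.

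The only genuinely delicate point is making rigorous the reduction to $|a|\le 1$ and pinning down the left endpoint $1/(4\sqrt 2)$ of the interval on which $A$ is to be maximized --- that is, confirming that the constraint from $|\beta_1|\le 1$ is exactly $|a|\ge 1/(4\sqrt 2)$ rather than the weaker $|a|\ge 1/16$; but in fact the monotonicity of $A$ on all of $[1/(4\sqrt 2),1]$ means only the right endpoint matters, so even the crude interval $[1/16,1]$ suffices as long as $A$ is checked to be increasing near $x=1$. Everything else is a one-line substitution. I would present this as the short remark it is, exactly as the sentence preceding the corollary already suggests.
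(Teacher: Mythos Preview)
Your overall approach is exactly the paper's: apply Theorem~\ref{AAP1th2}, then maximize $A(x)=16x+\tfrac{1}{2x}$ over the admissible range of $|a|$. However, the passage where you justify $|a|\le 1$ contains genuine errors.

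First, the sentence ``if $D$ were all of $\mathbb{C}$ then $h$ would have to be a linear map (an entire univalent function)'' is simply false: $h$ is only analytic on $\mathbb{D}$, not on $\mathbb{C}$, and membership in $\mathcal{F}$ imposes no univalence whatsoever---$\mathcal{F}$ only requires that $h$ vanish exactly once, at the origin. So neither ``entire'' nor ``univalent'' is available here. Second, even granting $D\neq\mathbb{C}$, your inference ``hence $D$ is hyperbolic'' does not follow: $D=\mathbb{C}\setminus\{p\}$ is not hyperbolic. The paper does not attempt any such argument; it tacitly relies on the hyperbolic hypothesis already present in Theorem~\ref{AAP1th2} (and in the intended applications, e.g.\ Theorem~\ref{AAP1prop3}, the image $h_{\rho_n}(\mathbb{D})$ is explicitly noted to be hyperbolic).

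There is also a confusion about the role of $1/(4\sqrt{2})$. This number is \emph{not} a proven lower bound for $|a|$ and does not come from $|\beta_1|\le 1$; that inequality only gives $|a|\ge 1/16$. The value $1/(4\sqrt{2})$ is the critical point of $A$, where $A'(x)=16-\tfrac{1}{2x^2}$ vanishes. On $[1/16,1]$ the function $A$ first decreases and then increases, so its maximum is at an endpoint: $A(1/16)=1+8=9$ versus $A(1)=16.5$, yielding the claimed bound. The paper phrases this as ``$A$ is increasing on $[1/(4\sqrt{2}),1]$,'' which together with $A(1/16)=9<16.5$ is all that is needed once $|a|\le 1$ is in hand.
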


Now, we are in a position to formulate an important general result.
First note that if $h(z)=\sum_{n=1}^{\infty}a_{n}z^{n}$ is analytic on $\overline{{\mathbb{D}}}
$, $h({\mathbb{D}})=D$ and $d(0,\partial D)=|a|$ for some $a\in \partial D$, then the function
$z\left( h(z)-a\right) /(-a)$ belongs to ${\mathcal{F}}$. Furthermore, we remark that $z(h(z)-a)$ is zero only at $0$.

\begin{theorem}\label{AAP1th3a}
Suppose that $h$ is conformal in ${\mathbb{D}}$, $h({\mathbb{D}})=D$ is hyperbolic
and $d(0,\partial D)=|a|$, where $a\in \partial D$ and $h(0)=h'(0)-1=0$. 
Then
\begin{equation*}
\frac{1}{16.5}\leq |a|<1.
\end{equation*}
\end{theorem}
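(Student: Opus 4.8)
The plan is to reduce Theorem \ref{AAP1th3a} to Theorem \ref{AAP1th2} by a suitable normalization. The obstacle is that $h$ in Theorem \ref{AAP1th3a} need not belong to $\mathcal F$ — its domain $D=h(\mathbb D)$ may not contain a near-point $a$ with the same role as in $\mathcal F$, because $h$ is only conformal (a covering map), not univalent, so $0\in D$ but the "omitted value of smallest modulus" is a point of $\partial D$, not necessarily an omitted value of $h$ in the usual sense. The key observation, already flagged in the paragraph preceding the theorem, is that if $h$ is analytic with $h(0)=0$ and $a\in\partial D$ is a nearest boundary point, then the auxiliary function
\begin{equation*}
\widetilde h(z)=\frac{z\bigl(h(z)-a\bigr)}{-a}
\end{equation*}
is analytic in $\mathbb D$, vanishes only at $z=0$, and hence lies in $\mathcal F$. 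However, here $h$ is merely conformal on $\mathbb D$, not analytic on $\overline{\mathbb D}$, so I would first pass to the dilated maps $h_\rho(z)=h(\rho z)$, which are analytic on $\overline{\mathbb D}$, form $\widetilde h_\rho\in\mathcal F$, apply the machinery, and then let $\rho\to1^-$.

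First I would fix, for each $\rho\in(0,1)$, a nearest boundary point $a_\rho\in\partial D_\rho$ where $D_\rho=h_\rho(\mathbb D)$, so that $d(0,\partial D_\rho)=|a_\rho|$ and $a_\rho\to a$ as $\rho\to1^-$. Second, I would apply Theorem \ref{AAP1th2} — or rather its underlying computation — to extract information about the second Taylor coefficient. The subtlety is that $\widetilde h_\rho(z)=z(h_\rho(z)-a_\rho)/(-a_\rho)$ has the expansion $\widetilde h_\rho(z)=z+\sum_{n\ge2}c_n z^n$ with $c_2=-h_\rho'(0)/a_\rho$; since $h'(0)=1$ (so $h_\rho'(0)=\rho$), we get $c_2=-\rho/a_\rho$. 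Now Theorem \ref{AAP1th2}, applied to $\widetilde h_\rho\in\mathcal F$, gives $1/16\le|a'|$ and $|c_2|\le 16|a'|+1/(2|a'|)$, where $a'$ is the nearest boundary point of $\widetilde h_\rho(\mathbb D)$ to the origin; but the cleaner route is to use Corollary \ref{AAP1cor1} directly on $\widetilde h_\rho$, or better, to invoke the bound $|c_2|\le 16.5$ from Corollary \ref{AAP1cor2}. That yields $\rho/|a_\rho|=|c_2|\le 16.5$, hence $|a_\rho|\ge \rho/16.5$, and letting $\rho\to1^-$ gives $|a|\ge 1/16.5$.

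Third, the upper bound $|a|<1$: since $D$ is hyperbolic, the covering map $h$ is subordinate to the universal covering onto $D$ (the Monodromy-theorem argument recalled in Section \ref{AAP1Sec1a}), and the Koebe-type lower bound for the hyperbolic metric — equivalently the inequality $d(0,\partial D)\le$ something involving $h'(0)=1$, cf. the "analog of Koebe one-quarter theorem for hyperbolic domains" cited after \eqref{bda}, or simply the fact that a hyperbolic domain cannot contain an arbitrarily large disk centered at a point where the covering has derivative $1$ — forces $|a|<1$. Concretely, Theorem \ref{AAP1th2} itself already asserts "if in addition $D$ is hyperbolic, then $|a|<1$" for members of $\mathcal F$; but I cannot apply that verbatim to $\widetilde h_\rho$, since hyperbolicity of $D$ does not transparently transfer to $\widetilde h_\rho(\mathbb D)$. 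Instead I would argue directly: $h\prec F$ where $F$ is the universal cover of $D$ with $F(0)=0$, $F'(0)>0$; then $1=h'(0)=F'(0)\varphi'(0)\le F'(0)$, and for a hyperbolic $D$ the quantity $F'(0)$ is strictly less than the conformal radius one would get from $\mathbb C$, which combined with the two-sided estimate $d(0,\partial D)\asymp F'(0)$ gives $|a|<1$. The main obstacle I anticipate is exactly this last point — making the strict inequality $|a|<1$ rigorous without circularity — and the remedy is to lean on the cited Koebe-type comparison (\cite{BP}, \cite[p.~894]{SV}) together with the subordination $h\prec F$, rather than trying to force Theorem \ref{AAP1th2} to apply to the auxiliary function.
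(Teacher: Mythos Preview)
Your approach is essentially the paper's: define $g(z)=z\,(a-h(z))/a\in\mathcal F$ and apply Corollary~\ref{AAP1cor2} to its second coefficient $-1/a$ to obtain $|a|\ge 1/16.5$. The dilation to $h_\rho$ is unnecessary, since $\mathcal F$ only demands analyticity in $\mathbb D$ and $a\in\partial D$ already guarantees $h(z)\ne a$ on $\mathbb D$, so $g\in\mathcal F$ directly without passing to $\overline{\mathbb D}$. For the upper bound the paper's argument is the single line $\lambda_D(0)\,d(0,\partial D)=\dfrac{1}{h'(0)}\,|a|=|a|<1$, invoking the universal inequality $\lambda_D(w)\,d(w,\partial D)\le 1$; your concern that the identity $\lambda_D(0)=1/h'(0)$ presupposes $h$ is the universal cover (rather than merely conformal) is legitimate, and the paper is indeed terse on that point.
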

\begin{proof}
Let
\begin{equation*}
g(z)=z\frac{a-h(z)}{a}=z-\frac{1}{a}z^{2} +\cdots,
\end{equation*}%
so that $g$ belongs to $\mathcal{F}$. Consequently, by Theorem \ref{AAP1th2}, it follows that $|1/a|\leq 16.5$.
Since $D$ has a hyperbolic metric $\lambda (z)$ and that
\begin{equation*}
\lambda (0)d(0,\partial D)=\frac{1}{h^{\prime }(0)}d(0,\partial D)=|a|<1,
\end{equation*}%
the result follows.  At this place it is worth recalling that $\lambda (z)d(z,\partial D)\leq 1$ holds always.
\end{proof}

For the proof of our next lemma, we need to establish some preliminaries. It is known from
the work of Abu Muhanna and Hallenbeck \cite{AbHal-08} that if $\alpha >0,$
and
\begin{equation*}
E_{\alpha }=\left\{f\in {\mathcal{A}}:\, f(z)\prec \exp (\alpha z/(1-z)),~
f(0)=1\right\},
\end{equation*}
then
\begin{equation*}
E_{\alpha }=\left\{ \int _{\partial {\mathbb{D}}} \exp \left ( \alpha \frac{%
xz}{1-xz}\right )d\mu (x): \,\mu \mbox{ is a probability
measure on $\mathbb{D}$}\right\} .
\end{equation*}
Each function $f\in E_{\alpha }$ maps ${\mathbb{D}}$ into $|w|>r=\exp
(-\alpha /2)$ and $f(0)=1.$ Clearly, the inclusion $E_{\alpha }\subset
E_{\beta }$ holds for $\alpha >\beta $ and thus, the coefficients of $f\in
E_\alpha $ are dominated by the corresponding coefficients of $F(z)=\exp
(\alpha z/(1-z))$, where
\begin{equation*}
F(z)=\exp (\alpha z/(1-z))=1+\alpha z+\frac{\alpha (\alpha +2)}{2}
z^{2}+\cdots =1+A_{1}z+A_{2}z^{2}+ \cdots .
\end{equation*}
In particular,
\begin{equation*}
\left |\frac{f^{\prime \prime }(0)}{2!}\right | \leq A_2=\frac{\alpha (\alpha +2)}{2} .
\end{equation*}
Clearly, as $\alpha \rightarrow \infty $ (circle $|w|=\rho >r=\exp (-\alpha /2)$ shrinks)  $r\rightarrow 0$
and $A_{1},A_{2}\rightarrow \infty .$

\begin{lemma}\label{AAP1lem2}
Suppose that $h(z)=z+a_{2}z^{2}+\cdots $ is analytic in
$\overline{\mathbb{D}}$ and misses the disk ${\mathbb{D}}(c,r):=\{z:\,|z-c|<r\}$ which touches
the boundary $\partial h({\mathbb{D}})$. Then the function $\Psi$ defined by $\Psi(z) =\frac{c-h(z)}{c}$
misses the disk ${\mathbb{D}}(0,\rho )$, where $\rho =r/|c|>1/16$,
and $|a_{2}|<20.9197|c|.$
\end{lemma}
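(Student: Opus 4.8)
The plan is to reduce the lemma to an application of Theorem \ref{AAP1th2} combined with the covering-property estimate coming from the omitted disk ${\mathbb{D}}(c,r)$. First I would observe that $\Psi(0)=1$ and that $\Psi$ omits the disk ${\mathbb{D}}(0,\rho)$ with $\rho=r/|c|$; since the omitted disk ${\mathbb{D}}(c,r)$ touches $\partial h({\mathbb{D}})$, the domain $D=h({\mathbb{D}})$ satisfies $d(0,\partial D)\le |c|-r$ actually — but more to the point, the quantity I want to control is $\rho$. The lower bound $\rho>1/16$ should come from Theorem \ref{AAP1th2} (the inequality $\eqref{bda}$, $|a|\ge 1/16$) applied to the function $g(z)=z(a-h(z))/a$ where $a$ is the nearest boundary point of $D$ to the origin: since the omitted disk touches $\partial D$, one has $|c|-r\le |a|$, hence... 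I would need to massage this into the stated form $\rho=r/|c|>1/16$. The cleanest route is: the function $z\,\Psi(z) = z(c-h(z))/c \in {\mathcal F}$ and its image omits a disk of radius $\rho$ centred at $0$; apply Corollary \ref{AAP1cor1} or Theorem \ref{AAP1th2} to deduce $\rho\ge 1/16$, with strictness from hyperbolicity.

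Next, for the coefficient bound, I would write $\Psi(z) = 1 - \frac{1}{c}h(z) = 1 - \frac{1}{c}z - \frac{a_2}{c}z^2 - \cdots$, so that $\Psi''(0)/2! = -a_2/c$. Since $\Psi$ maps ${\mathbb{D}}$ into $\{|w|>\rho\}$ with $\Psi(0)=1$, the function $1/\Psi$ (or a suitable normalization) lands in a half-plane-type region, but the sharper tool is the family $E_\alpha$ discussed just before the lemma: a function omitting ${\mathbb{D}}(0,\rho)$ with value $1$ at $0$ is subordinate to a rotation of $F(z)=\exp(\alpha z/(1-z))$ with $e^{-\alpha/2}=\rho$, i.e. $\alpha = 2\log(1/\rho) = -2\log\rho$. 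Then the second-coefficient bound for $E_\alpha$ gives $|a_2/c| = |\Psi''(0)/2!|\le A_2 = \alpha(\alpha+2)/2$ — but wait, this is increasing in $\alpha$, hence decreasing in $\rho$, so the worst case is $\rho$ as small as possible, namely $\rho = 1/16$, giving $\alpha = 2\log 16 = 8\log 2$. Plugging in: $A_2 = \tfrac12 (8\log 2)(8\log 2 + 2) = (4\log 2)(8\log 2+2)$. Numerically $\log 2 \approx 0.6931$, so $4\log 2\approx 2.7726$, $8\log 2+2\approx 7.5452$, product $\approx 20.9197$. That matches the claimed constant, confirming this is the right approach.

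So the key steps, in order, are: (1) set $\Psi(z)=(c-h(z))/c$, check $\Psi(0)=1$ and that $\Psi$ omits ${\mathbb{D}}(0,\rho)$ with $\rho=r/|c|$; (2) establish $\rho>1/16$ via Theorem \ref{AAP1th2} applied to $z\Psi(z)\in{\mathcal F}$ (using that the omitted disk touches the boundary, so the nearest-point distance is controlled); (3) invoke the subordination $\Psi\prec $ a rotation of $\exp(\alpha z/(1-z))$ with $\alpha=-2\log\rho$, legitimate because $\Psi$ misses a disk of radius $\rho=e^{-\alpha/2}$ about the origin — here one uses that the slit-map/Koebe-type extremal for "maps into the exterior of a disk" is exactly this exponential, or equivalently that $-\log\Psi$ maps into a half-plane and one applies the classical subordination; (4) read off $|\Psi''(0)/2!| = |a_2|/|c| \le A_2 = \alpha(\alpha+2)/2$, which is monotone, so substitute the extreme value $\rho=1/16$, i.e. $\alpha=8\log 2$, to get $|a_2|\le \tfrac12(8\log2)(8\log2+2)|c| = 20.9197|c|$.

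The main obstacle I anticipate is step (3): justifying rigorously that a function $\Psi$ with $\Psi(0)=1$ omitting the open disk ${\mathbb{D}}(0,\rho)$ is subordinate to a rotation of $F(z)=\exp(\alpha z/(1-z))$ with $e^{-\alpha/2}=\rho$. One must be careful: $\Psi$ need not be univalent, and "omitting a disk about $0$" is not by itself the extremal configuration unless one also knows the omitted disk touches $\partial h({\mathbb{D}})$ so that $\rho$ is the true inradius-type quantity at $0$. The correct statement is that among all analytic $\Psi$ with $\Psi(0)=1$ whose range lies in $\{|w|>\rho\}$, the extremal for $|\Psi''(0)|$ (and indeed a dominating function in the subordination sense for coefficients) is the universal covering of $\{|w|>\rho\}$, which is a rotation of $\exp(\alpha z/(1-z))$ — this is a standard fact about the simply connected cover of a punctured-disk-exterior, and the monotonicity of $E_\alpha\subset E_\beta$ recorded before the lemma does the rest. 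The only genuine care needed is matching $\alpha$ to $\rho$ correctly ($r = e^{-\alpha/2}$, so small $\rho$ means large $\alpha$ means large bound) and then using $\rho\ge 1/16$ from step (2) to cap $\alpha$ at $8\log 2$; the arithmetic $\tfrac12(8\log2)(8\log2+2)=20.9197\ldots$ is then routine.
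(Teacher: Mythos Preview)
Your proposal is correct and follows essentially the same route as the paper: define $\Psi(z)=(c-h(z))/c$, observe $g(z)=z\Psi(z)\in\mathcal{F}$ with $g'(0)=1$ so that the $1/16$ covering bound forces $\rho=r/|c|\ge 1/16$, then use the $E_\alpha$ subordination (universal cover of $\{|w|>\rho\}$ with $\rho=e^{-\alpha/2}$) to get $|a_2/c|\le \alpha(\alpha+2)/2$ and plug in $\alpha=8\log 2$. Your identification of step~(3) as the point needing care---that $\Psi$ is subordinate to a rotation of $\exp(\alpha z/(1-z))$ because the latter is the universal cover of the disk exterior---is exactly what the paper is using implicitly via the $E_\alpha$ discussion preceding the lemma, and your arithmetic $(4\log 2)(8\log 2+2)=8\log 2(4\log 2+1)\approx 20.9197$ agrees with the paper's.
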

\begin{proof}
By assumption, the function
\begin{equation*}
\Psi(z)=\frac{c-h(z)}{c}=1-\frac{1}{c}z-\frac{a_{2}}{c}z^{2}+\cdots
\end{equation*}%
misses the disk ${\mathbb{D}}(0,r/|c|)$ and its boundary touches the circle $%
|w|=r/|c|.$ Then $g$ defined by $g(z)=z\Psi(z)$ 
belongs to the family $\mathcal{F}$ and thus, if the nearest point to $0$ is $%
a=g(e^{i\theta })$, then $|a|=r/|c|\geq 1/16$.  Note that $z(c-h(z))$ is zero only at $0$
and thus, $\rho >1/16$ is indeed a consequence of Nehari's result.

Consequently,
\begin{equation*}
\left\vert \frac{a_{2}}{c}\right\vert \leq \frac{\alpha (\alpha +2)}{2},
\end{equation*}%
where
\begin{equation*}
\frac{r}{|c|}=\exp (-\alpha /2).
\end{equation*}
Thus
$$\alpha =\log \left(
|c|/r\right) ^{2}<2\log \left( 16\right) =8\log 2 \approx 5.54518.$$
This gives the estimate
\begin{equation*}
\left\vert a_{2}\right\vert \leq |c|\frac{\alpha (\alpha +2)}{2}<|c|[8\log
(2)(4\log 2+1)]\approx 20.9197 |c|
\end{equation*}%
and completes the proof.
\end{proof}

The proof of Theorem \ref{AAP1prop3} below depends on the following remark.

\begin{remark}
Suppose that $h(z)=z+a_{2}z^{2}+\cdots $ is conformal on $\overline{{\mathbb{
D}}}$ and $d(0,\partial h({\mathbb{D}}))=|a|<1$, where $a\in \partial h({\mathbb{D}}).$
It is worth pointing out that ${\mathbb{D}}(a,1-|a|)\cap (h({\mathbb{D}}))^c $ is non-empty and open,
where ${\mathbb{D}}(a,r):=\{z:\,|z-a|<r\}$. Thus, there is a
complex number $c$ in the complement $(h({\mathbb{D}}))^c$ with $|c|<1-|a|$ and a positive number $r$ so that
\begin{equation*}
{\mathbb{D}}(c,r)\subset (h({\mathbb{D}}))^c
\end{equation*}%
and touches the boundary $\partial h({\mathbb{D}}).$ With this $c,$
$|a_{2}|<20.9197.$
\end{remark}

In this remark, it suffices to assume that  $h(z)=z+a_{2}z^{2}+\cdots $ is conformal on $\ID$; otherwise,  consider
$h_{\rho}(z)=(1/\rho)h(\rho z)$, $\rho \in (0,1)$ in the proof, and then let $\rho \to 1^{-}$.

Although $h^{\prime}(z)\neq0$ for $f=h+\overline{g}\in{\mathcal{S}}_{H}^{0}$, the function $h$ can however vanish several times.
This fact is illustrated by the first author in \cite{Mu} by the function
\begin{equation*}
h(z)=\frac{-\coth z+z+(\log\sinh z-\log\sinh1)+\coth1-1}{2}, ~z\in R.
\end{equation*}
Here $R$ is the open right half-plane and the dilatation $\varphi(z)=(\coth
z-1)/(\coth z+1)$ maps $R$ onto the punctured disk $\mathbb{D}%
\setminus\{0\}. $ In \cite{Mu}, the function $h$ was shown to have infinite
valence and finitely many zeros.

Finally, we conclude the section with the proof of Theorem \ref{AAP1prop3}.

\vspace{6pt}
\noindent
{\bf Proof of Theorem \ref{AAP1prop3}.}
Let $\rho _{n}$ be a sequence of radii increasing to $1$.
Now, consider the analytic part of $f_{\rho _{n}}(z)=(1/\rho _{n})f(\rho
_{n}z)$, namely, the function
\begin{equation*}
h_{\rho _{n}}(z)=(1/\rho _{n})h(\rho _{n}z)=z+\sum_{k=2}^{\infty }a_{k}\rho
_{n}^{k-1}z^{k}.
\end{equation*}
Since $D_{\rho _{n}}=h_{\rho _{n}}(\mathbb{D})$ is hyperbolic, by Lemma \ref%
{AAP1lem2}, the second coefficient of $h_{\rho _{n}}$ gives the estimate $%
|a_{2}\rho _{n}|\leq 20.9197$ for each $n$. The desired conclusion follows
when $n\rightarrow \infty $.
\hfill $\Box$

%
%
%

\section{Spherical area of the covering surface of $h$ over $D.$ \label%
{AAP1Sec3-area}}

The spherical metric on $\overline{\mathbb{C}}$ (the Riemann sphere) is
defined by
\begin{equation*} 
\sigma (z)|dz|=\frac{|dz|}{1+|z|^{2}},
\end{equation*}
and the spherical area of (a surface of $f$ above) $\mathbb{D}$ given by a harmonic
map $f$ is
\begin{equation*}
A_{s}(f)=\displaystyle\iint \limits_{{\mathbb{D}}}\frac{J_{f}(z)\,dA}{\left(
1+|f(z)|^{2}\right) ^{2}},
\end{equation*}
where $J_f$ denotes the Jacobian of $f$. When $f$ is analytic, then the
spherical area becomes
\begin{equation*}
A_{s}(f)=\displaystyle\iint \limits_{{\mathbb{D}}}\frac{|f^{\prime
}(z)|^{2}\,dA}{\left( 1+|f(z)|^{2}\right) ^{2}}.
\end{equation*}
Clearly, if the surface covers the plane exactly once then $A_{s}(f)=4\pi .$

The hyperbolic (or Poincar\'e) metric in $\mathbb{D}$ \cite{BM,SA, GL,SV} is
the Riemannian metric defined by $\lambda _{\mathbb{D}}(z)|dz|$, where
\begin{equation*}
\lambda _{\mathbb{D}}(z)=\frac{1}{1-|z|^{2}}.
\end{equation*}
Note that metrics $\sigma $ and $\lambda _{\mathbb{D}}$ have constant
curvatures $4$ and $-4$, respectively. Using analytic maps, hyperbolic
metrics can be transferred from one domain to another. Indeed, for a
given hyperbolic domain $\Omega $, and a (conformal) universal covering map $%
f:\,\mathbb{D} \rightarrow \Omega $, the hyperbolic metric of $\Omega $ is
given by
\begin{equation*}
\lambda _{\Omega }(f(z))|f^{\prime }(z)|\,|dz|=\lambda _{\mathbb{D}}(z)|dz|.
\end{equation*}
In particular,
$$\lambda _{\Omega }(f(0))=\frac{1}{|f'(0)|},
$$
a fact which is already used in the proof of Theorem \ref{AAP1th3a}.
It is well-known that the metric $\lambda _{\Omega }$ is independent of the
choice of the conformal map $f$ used.

When $\Omega $ is simply connected, the Koebe one-quarter theorem \cite[p. 22%
]{CP} gives the sharp estimates
\begin{equation}
\frac{1}{4}\leq d(w,\partial \Omega )\lambda _{\Omega }(w)\leq 1.
\label{Koe}
\end{equation}
When $\Omega $ is a hyperbolic domain, the estimates established in \cite{BP}
and \cite[p. 894]{SV} are
\begin{equation}
\frac{1}{2(\beta _{\Omega }(w)+C_{0})}\leq d(w,\partial \Omega )\lambda
_{\Omega }(w)\leq \min \left\{ 1,\frac{2C_{0}+\pi /2}{2(\beta _{\Omega
}(w)+C_{0})}\right\} ,  \label{euchypdom}
\end{equation}
where
\begin{equation*}
\beta _{\Omega }(w)=\underset{b\in \partial \Omega }{\inf }\left | \log
\left | \frac{w-a}{b-a}\right | \,\right | ,
\end{equation*}
with $a\in \partial \Omega $, $|w-a|=d(w,\partial \Omega )$, and $%
C_{0}\approx 4.37688$. The lower bound in \eqref{euchypdom} is known to be
sharp but not the upper bound. Indeed it is difficult to estimate $\beta
_{\Omega }(w)$, which is the modulus of the largest annulus inside $\Omega $
separating the boundary $\partial \Omega $. However if $\partial \Omega $ is
a connected set, then $\beta _{\Omega }(w)=0$ and a lower bound is $1/8$,
which is the right estimate that one could get from \eqref{euchypdom}.

In the following result, $A_{s}(D):=A_{s}(D,h)$ denotes spherical area of
the covering surface of $h$ over $D.$ Moreover, if  $f=h+\overline{g}$ is a sense-preserving
harmonic mapping in $\mathbb{D}$, then $J_{f}(z)= |h^{\prime}(z)|^{2} -|g^{\prime}(z)|^{2}>0$ in
${\mathbb{D}}$ and thus, the analytic dilatation $\varphi(z):= \varphi_f(z) =g^{\prime}(z)/h^{\prime}(z)$ of $f$
satisfies $|\varphi(z)|<1$ for all $z\in\mathbb{D}$. Define $||\varphi||_{\infty}:=\sup_{z\in\mathbb{D}}|\varphi (z)|.$

\begin{theorem}
\label{AAP1prop2} Let $f=h+\overline{g}$ be a sense-preserving univalent
harmonic mapping in $\mathbb{D}$ given by \eqref{harm} with the dilatation $\varphi$.
Suppose that $\alpha =||\varphi||_{\infty}<1$ and $D=h({\mathbb{D}})$ is
hyperbolic. Then the spherical area of the covering surface of $D$ satisfies
\begin{equation*}
\frac{1-\alpha^{2}}{4} A_{s}(D)\leq A_{s}(\Omega):=A_{s}(\Omega ,f)\leq 4\pi,
\end{equation*}
where $f({\mathbb{D}})=\Omega$ and
\begin{equation*}
A_{s}(D)=\displaystyle\iint\limits_{{\mathbb{D}}}\frac{|h^{\prime}(z)|^{2}%
\,dA}{\left( 1+|h(z)|^{2}\right) ^{2}}.
\end{equation*}
\end{theorem}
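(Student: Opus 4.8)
The plan is to prove the two inequalities separately, the right-hand one being essentially immediate and the left-hand one requiring the hypothesis $\alpha<1$ together with the harmonic Schwarz-type bound coming from Theorem~\ref{AAP1th3a}. For the upper bound $A_s(\Omega)\le 4\pi$, I would argue that since $f$ is univalent, the map $z\mapsto f(z)$ is a diffeomorphism of $\mathbb D$ onto $\Omega\subset\mathbb C$, so the change of variables $w=f(z)$ transforms the spherical-area integral $A_s(\Omega,f)=\iint_{\mathbb D}J_f(z)\,dA/(1+|f(z)|^2)^2$ into $\iint_{\Omega}dA(w)/(1+|w|^2)^2$, which is at most the total spherical area of the plane, namely $4\pi$ (equality iff $\Omega=\mathbb C$, which cannot occur here since $D$, hence $\Omega$, is hyperbolic, but the non-strict inequality is all that is claimed). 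This step is routine once one notes $J_f>0$ by sense-preservation, so no absolute values or orientation subtleties arise.

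For the left-hand inequality the key pointwise estimate is that $|h'(z)|^2\le \dfrac{1}{1-\alpha^2}\,J_f(z)$ for all $z\in\mathbb D$. Indeed $J_f(z)=|h'(z)|^2-|g'(z)|^2=|h'(z)|^2\bigl(1-|\varphi(z)|^2\bigr)\ge |h'(z)|^2(1-\alpha^2)$, since $|\varphi(z)|\le\alpha$. Combined with the elementary fact that $1+|f(z)|^2$ and $1+|h(z)|^2$ need to be compared, this alone is not quite enough: we must also relate the denominators $(1+|f(z)|^2)^2$ and $(1+|h(z)|^2)^2$. Here is where I expect the main obstacle. One cannot simply bound $1+|h(z)|^2\ge 1+|f(z)|^2$ or the reverse pointwise. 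The natural fix is to use that $A_s(D)$ is defined with the covering surface of $h$ over $D=h(\mathbb D)$, so one should instead perform the change of variables on $D$ directly: $A_s(D)=\iint_{\mathbb D}|h'(z)|^2\,dA/(1+|h(z)|^2)^2$, and since $D$ is hyperbolic the covering-map machinery (Corollary~\ref{AAP1cor1}, Theorem~\ref{AAP1th3a}) gives $d(0,\partial D)=|a|<1$, which forces $h(\mathbb D)=D$ to omit points and thus the integral counts the spherical area of $D$ with multiplicity. The cleanest route is probably: bound $A_s(D)$ above by $\frac{1}{1-\alpha^2}\iint_{\mathbb D}\frac{J_f(z)\,dA}{(1+|h(z)|^2)^2}$ and then argue the surfaces lie over regions related so that $\iint_{\mathbb D}\frac{J_f\,dA}{(1+|h|^2)^2}\le A_s(\Omega)$; this last comparison is the delicate point and likely uses that $f$ and $h$ share the same $h$, with $|f-h|=|g|$ controlled, or an argument that the spherical area is conformally natural.

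Concretely, I would organize the writeup as follows. First establish the pointwise Jacobian inequality $J_f\ge(1-\alpha^2)|h'|^2$. Second, use the univalence of $f$ to write $A_s(\Omega)=\iint_{\Omega}\sigma(w)^2\,dA(w)\le 4\pi$. Third, for the lower bound, note $\frac{1-\alpha^2}{4}A_s(D)\le\frac14\iint_{\mathbb D}\frac{J_f\,dA}{(1+|h|^2)^2}$; then I would invoke that $h$ and $f$ have the same image structure up to the harmonic perturbation and that, crucially, $A_s(D)$ is finite — a fact following from Theorem~\ref{AAP1th3a} since $d(0,\partial D)=|a|<1$ bounds $\lambda_D$ from below, giving an isoperimetric-type control on $\iint_{\mathbb D}|h'|^2/(1+|h|^2)^2\,dA$. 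The finiteness of $A_s(D)$ is asserted in the introduction as part of the theorem's content, so I would prove it via the change of variables $A_s(D)\le 4\pi\cdot(\text{valence bound})$ where the valence of $h$, though possibly infinite as the example after Lemma~\ref{AAP1lem2} shows, is nonetheless integrable against the spherical metric because $D$ is hyperbolic. The hardest part, I anticipate, is making rigorous the comparison between the covering surface of $h$ over $D$ and the (univalent) surface of $f$ over $\Omega$ — that is, showing $\iint_{\mathbb D}\frac{J_f\,dA}{(1+|h(z)|^2)^2}\le 4\pi$ or directly $\le 4A_s(\Omega)/(1-\alpha^2)$ — since $h$ is only the analytic part of $f$ and its surface genuinely overlaps itself; I would handle this by a subordination argument ($h\prec aQ$ from Corollary~\ref{AAP1cor1}) reducing everything to the explicit modular-function surface, whose spherical area can be estimated directly.
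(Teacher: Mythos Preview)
Your upper bound $A_s(\Omega)\le 4\pi$ is fine and matches the paper's use of it. The lower bound, however, has a genuine gap, and your own diagnosis locates it correctly: after the pointwise Jacobian inequality $J_f\ge(1-\alpha^2)|h'|^2$ you still need to compare the denominators $(1+|h(z)|^2)^2$ and $(1+|f(z)|^2)^2$, and nothing you propose actually does this. The subordination $h\prec aQ$ gives information about the \emph{image} of $h$, not about a pointwise relation between $|h(z)|$ and $|f(z)|$; and a valence bound for $h$ is hopeless, since (as the example quoted after Lemma~\ref{AAP1lem2} shows) $h$ can have infinite valence. So the inequality $\iint_{\mathbb D}\frac{J_f\,dA}{(1+|h|^2)^2}\le C\,A_s(\Omega)$ that you need is left unproved.

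The paper's device is to stop working on $\mathbb D$ and instead pass to the \emph{target} $D=h(\mathbb D)$ via the local inverses of $h$. For a small disk $N(a,r)\subset D$ and a sheet $U_n^a$ of $h^{-1}(N(a,r))$, set $F_n(w)=f(h^{-1}(w))=w+\overline{g(h^{-1}(w))}$ on $N(a,r)$. In these coordinates the Jacobian becomes $1-|\varphi(h^{-1}(w))|^2$, and the crucial point is the estimate
\[
|F_n(w)-w|=|g(h^{-1}(w))|\le\int_a^{w}|\varphi(h^{-1}(\zeta))|\,|d\zeta|\le |w-a|,
\]
obtained by integrating the dilatation along a segment in $N(a,r)$. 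With the normalisation $a=0$ (or the restriction $r<|a|/2$) this yields $|F_n(w)|\le 2|w|$, hence $(1+|F_n(w)|^2)^{-2}\ge (1+4|w|^2)^{-2}$, which is exactly the denominator comparison you were missing, and it produces the factor $4$ in the statement. One then sums over the sheets $U_n^a$ (disjoint because $f$ is univalent) and over a disjoint family of such disks covering $D$. The moral: the comparison between $|f|$ and $|h|$ is not made on $\mathbb D$ at all, but becomes the linear estimate $|F_n(w)|\le 2|w|$ after pushing forward by $h$.
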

\begin{proof}
Let $f=h+\overline{g}$ and $N(a,r)$ be a disk in $D$ so that the components
of $h^{-1}(N(a,r)),$ namely, $\{U_{n}^{a}\}$, are disjoint, open, connected
and $h$ maps each component univalently onto the disk $N(a,r).$ In other
words, $\{U_{n}^{a}\}$ is the covering above $N(a,r)$. Let $U_{n}^{a}$ be an
arbitrary component with $h(b)=a$ and $b\in U_{n}^{a}.$ We now introduce
\begin{equation*}
F_{n}(w)=f(h^{-1}(w))~\mbox{ for }~ w\in N(a,r).
\end{equation*}
Then $F_{n}(w)=w+\overline{g(h^{-1}(w))}$ and therefore, with $D=h({\mathbb{D%
}})$ and $z=h^{-1}(w)$, it follows easily that
\begin{equation*}
(F_{n})_{w}(w)=1 ~\mbox{and} ~\overline{(F_{n})_{\overline{w}} (w)}= \frac{%
g^{\prime}(z)}{h^{\prime}(z)}=\varphi(z)= \varphi(h^{-1}(w)).
\end{equation*}

Since $\{U_{n}^{a}\}$ are disjoint and $f$ is univalent, $%
F_{n}(N(a,r))=f(U_{n}^{a})$ are also disjoint. Consequently, the spherical
area of the image of $F_{n}$ above $U_{n}^{a}$ is given by
\begin{equation}  \label{AAP1eq2}
A_{s}(F_{n})=\displaystyle\iint\limits_{N(a,r)}\frac{J_{F_{n}}(z)\,dA_{w}}{%
\left( 1+|F_{n}(w)|^{2}\right) ^{2}} =\displaystyle\iint \limits_{N(a,r)}%
\frac{(1-|\varphi(h^{-1}(w))|^{2})\,dA_{w} }{\left( 1+|F_{n}(w)|^{2}\right)
^{2}}.
\end{equation}
Note that
\begin{equation*}
|F_{n}(w)-w|\leq\left| g(h^{-1}(w))\right| \leq\displaystyle\int
\limits_{a}^{w}\left| \varphi(h^{-1}(w))\right| |dw|\leq|w-a|
\end{equation*}
and therefore, $|F_{n}(w)|\leq|w|+|w-a|.$ When $a=0$, we get the estimate $%
|F_{n}(w)|\leq2|w|$. On the other hand, when $a\neq0$ we may add the
condition $r<|a|/2$ so that $|w-a|\leq|a|/2\leq|w|,$ which again yields $%
|F_{n}(w)|\leq2|w|.$ In both cases, \eqref{AAP1eq2} gives
\begin{align*}
A_{s}(F_{n}) & \geq \frac{1-\alpha^{2}}{4}\displaystyle \iint%
\limits_{N(a,r)} \frac{4\,dA_{w} }{\left( 1+4|w|^{2}\right) ^{2}} \\
& \geq\frac{1-\alpha^{2}}{4}A_{s}(2N(a,r)),
\end{align*}
where $A_{s}(2N(a,r))$ is the spherical area of $2N(a,r).$ Thus, the
spherical area of the part of the covering surface of $D$ above $N(a,r)$ is
dominated by the spherical area of the union of $\{f(U_{n}^{a})\}.$ In other
words,
\begin{equation*}
A_{s}(N(a,r))\leq\displaystyle\sum\limits_{n}A_{s}(f(U_{n}^{a}))\leq\frac {4%
}{1-\alpha^{2}}A_{s}(\Omega).
\end{equation*}

Let $D_{1}=\bigcup\limits_{j}N(a_{j},r_{j})$ be disjoint union of disks in $%
D,$ and $U_{1}=\bigcup\limits_{j}\bigcup\limits_{n}U_{n}^{a_{j}}$ the
corresponding disjoint covers. Note that $\{U_{n}^{a_{j}}\}$ are disjoint,
for all $a_{j}$ and $n.$ Then the spherical area of the covering surface of $%
D$ above $D_{1}$ is
\begin{align*}
A_{s}(h(U_{1})) & =\displaystyle\sum\limits_{j=1}^{\infty}\displaystyle\sum
\limits_{n}A_{s}(h(U_{n}^{a_{j}})) \\
& \leq\frac{4}{1-\alpha^{2}}\displaystyle\sum\limits_{j=1}^{\infty }%
\displaystyle\sum\limits_{n}A_{s}(f(U_{n}^{a_{j}})) \\
& \leq\frac{4}{1-\alpha^{2}}A_{s}(\Omega) \\
& \leq\frac{16\pi}{1-\alpha^{2}} .
\end{align*}
Consequently, the spherical area of the covering surface of $D$ is less than
or equal to $16\pi/(1-\alpha^{2}),$ which completes the proof.
\end{proof}

We end the section with the following.

\begin{conj}
The condition on the dilatation in Theorem \ref{AAP1prop2} can be removed.
\end{conj}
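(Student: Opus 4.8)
The plan is to run the covering argument behind Theorem \ref{AAP1prop2} without discarding the pointwise Jacobian. Recall $J_f=|h'|^2-|g'|^2=(1-|\varphi|^2)|h'|^2$. Carrying out the estimate \eqref{AAP1eq2} with the weight $1-|\varphi(h^{-1}(w))|^2$ \emph{retained}, using the bound $|F_n(w)|\le 2|w|$ exactly as in that proof, and summing over a disk exhaustion of $D$, one arrives --- now with \emph{no} hypothesis on $\|\varphi\|_\infty$ --- at
\[
\iint_{\mathbb{D}}\frac{J_f(z)}{\left(1+|h(z)|^2\right)^2}\,dA
=\iint_{\mathbb{D}}\frac{\left(1-|\varphi(z)|^2\right)|h'(z)|^2}{\left(1+|h(z)|^2\right)^2}\,dA\ \le\ 16\pi .
\]
(Equivalently: apply Theorem \ref{AAP1prop2} to $f_\rho(z)=\rho^{-1}f(\rho z)$, whose dilatation $\varphi(\rho z)$ has sup-norm $<1$, keep the pointwise weight inside its proof, and let $\rho\to 1^-$ by monotone convergence.) Splitting $\mathbb{D}=G\cup E$ with $G=\{z:\,|\varphi(z)|^2\le\tfrac12\}$ and $E=\{z:\,|\varphi(z)|^2>\tfrac12\}$, on $G$ we have $1-|\varphi|^2\ge\tfrac12$, so $\iint_G|h'|^2(1+|h|^2)^{-2}\,dA\le 32\pi$. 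Thus the conjecture reduces to the single claim that $\iint_{E}|h'(z)|^2\bigl(1+|h(z)|^2\bigr)^{-2}\,dA<\infty$, i.e.\ that the "degenerate set" where $|\varphi|$ is close to $1$ contributes only finitely much to $A_s(D)$.

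I would attack this reduced claim by exploiting that on $E$ the harmonic map $f$ is close to a "fold". At $z\in E$ the differential of $f$ sends an infinitesimal disk onto an ellipse with semiaxes $|h'(z)|\bigl(1\pm|\varphi(z)|\bigr)$, so the image is a thin slit of spherical area comparable to $J_f(z)\bigl(1+|h(z)|^2\bigr)^{-2}$ but of transverse size comparable to $|h'(z)|\bigl(1+|h(z)|^2\bigr)^{-1}$. Since $f$ is globally univalent, these slits are pairwise disjoint and fill $f(\mathbb{D})\subset\widehat{\mathbb{C}}$, whose spherical area is at most $4\pi$. The mechanism I would hope to make rigorous is a length--area inequality on each connected component of $E$: the images under $f$ of the curves $\{\arg(g'/h')=\text{const}\}$ foliate $f(\text{component})$ coherently, and a thin, smoothly foliated region of small spherical area cannot carry a foliation of arbitrarily large total transverse length. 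Turning this into a bound on $\iint_E|h'|^2(1+|h|^2)^{-2}\,dA$ that is uniform in how close $|\varphi|$ comes to $1$ is the crux and, I expect, the main obstacle: mere disjointness of the slits does not bound their lengths, so one must use the \emph{rigidity} of $f$ on each component of $E$, perhaps together with the Schwarz--Pick inequality $\lambda_D(h(z))|h'(z)|\le(1-|z|^2)^{-1}$ and the hyperbolicity of $D$ via \eqref{euchypdom} and its constant $C_0$, so as to transfer the control over $f(\mathbb{D})$ back to the covering surface of $h$.

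If an explicit estimate proves too hard, a weaker but still conclusive route is a compactness argument giving mere finiteness of $A_s(D)$: were the conjecture false there would be normalized sense-preserving univalent harmonic maps $f_k=h_k+\overline{g_k}$ with $h_k(\mathbb{D})$ hyperbolic and $A_s(D_k)\to\infty$ (or a single such $f$ with $A_s(D)=\infty$), and normality of $\mathcal{S}_H$ would furnish a locally uniform limit $f=h+\overline{g}$ whose local structure near $\{|\varphi|=1\}$ is modelled on the half-plane example of \cite{Mu}, where $|\varphi|$ attains the value $1$ on the boundary and $h$ has infinite valence; one would then seek a contradiction with the displayed inequalities above. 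The delicate point is that $A_s$ is only \emph{lower} semicontinuous under local uniform convergence, so the limiting argument must be organised around the weighted quantity $\iint_{\mathbb{D}}J_f(1+|h|^2)^{-2}\,dA$, which is well behaved, rather than around $A_s(D_k)$ itself. In any case, establishing that $A_s(D)$ is finite --- even without the absolute bound $16\pi$ --- would already confirm the conjecture.
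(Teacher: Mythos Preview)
This statement is a conjecture that the paper poses as an open problem immediately after Theorem~\ref{AAP1prop2}; the paper gives no argument for it, so there is no proof to compare your proposal against.

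Your proposal is not a proof either, and you say as much. The first reduction is sound: re-running the covering argument of Theorem~\ref{AAP1prop2} while retaining the pointwise weight $1-|\varphi|^{2}$ (rather than replacing it by $1-\alpha^{2}$) does give an unconditional bound of the form
\[
\iint_{\mathbb{D}}\frac{J_{f}(z)}{(1+|h(z)|^{2})^{2}}\,dA\;\le\;C\,A_{s}(\Omega)\;\le\;4C\pi
\]
(the constant comes out as $64\pi$ rather than $16\pi$ once one tracks the step $(1+4|w|^{2})^{2}\le 16(1+|w|^{2})^{2}$, but this is immaterial), and the split $\mathbb{D}=G\cup E$ with $G=\{|\varphi|^{2}\le 1/2\}$ correctly isolates the only real obstacle, namely the finiteness of $\iint_{E}|h'|^{2}(1+|h|^{2})^{-2}\,dA$ on the set where the dilatation is close to $1$.

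Beyond that point, however, both suggested attacks remain heuristic. For the length--area idea you yourself name the gap: disjointness of the image ``slits'' under $f$ bounds only their total spherical \emph{area}, not their total transverse extent, and no concrete rigidity of $f$ on components of $E$ is established that would convert one into the other. The compactness route is likewise incomplete, for the reason you state: $A_{s}$ is only lower semicontinuous under locally uniform convergence, so a hypothetical blow-up sequence need not transmit $A_{s}(D_{k})\to\infty$ to the limit. In short, you have produced a correct and useful reformulation of the conjecture, but the hard part is left open --- which is exactly the status the paper assigns to it.
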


\section{Distortion estimates for univalent harmonic mappings}

\label{AAP1Sec4}

For $f=h+\overline{g}\in{\mathcal{S}}_{H}$, it is known \cite[ pp. 92, 98]%
{PD2} that
\begin{equation}
\frac{1}{16}(1-|\mu(z)|)\leq d(f(z),\partial\Omega)\lambda_{D}(h(z))\leq c,
\label{eq3.1}
\end{equation}
where $\mu$ is the dilatation of $f$, $1\leq c<2$, $f({\mathbb{D}})=\Omega$,
and $h({\mathbb{D}})=D$. If $D$ is hyperbolic, $h$ satisfies the general
inequality in (\ref{euchypdom}). Moreover, since $D$ is a simply connected
domain, (\ref{Koe}) gives
\begin{equation}
\frac{1}{4}\leq d(f(z),\partial\Omega)\lambda_{\Omega}(f(z))\leq 1.
\label{eq3.2}
\end{equation}
It follows from (\ref{eq3.1}) and (\ref{eq3.2}) that
\begin{equation*}
\frac{1}{16}(1-|\mu(z)|)\lambda_{\Omega}(f(z))\leq\lambda_{D}(h(z))\leq
4c\lambda_{\Omega}(f(z)).
\end{equation*}
Combining (\ref{euchypdom}) and (\ref{eq3.1}) yields
\begin{equation}
\frac{1}{16}(1-|\mu(z)|)\leq\frac{d(f(z),\partial\Omega)}{d(h(z),\partial D)}%
\leq2c(\beta_{D}(w)+C_{0}).  \label{eq3.4}
\end{equation}

The next result gives better estimates than (\ref{eq3.4}).

\begin{theorem}
Let $f=h+\overline{g}\in{\mathcal{S}}_{H}^{0}$, $f({\mathbb{D}})=\Omega$, $h(%
{\mathbb{D}})=D$ is hyperbolic, and $a\in\partial D$ be the nearest point to
the origin $0$. Then
\begin{equation}\label{bdf-1a}
\frac{1}{16} \leq d(0,\partial D)\leq1
\end{equation}
and
\begin{equation}
\frac{1}{16}(1-|\mu(z)|) \leq d(f(z),\partial\Omega)\leq2d(h(z),\partial D).
\label{bdf}
\end{equation}
\end{theorem}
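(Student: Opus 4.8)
The plan is to derive both displays from results already available in the excerpt, keeping track of the two normalizations involved: the harmonic map $f=h+\overline g\in\mathcal S_H^0$ is normalized so that $h(0)=h'(0)-1=0$, and $D=h(\mathbb D)$ is assumed hyperbolic, so $h$ is conformal with the standard normalization. First I would obtain \eqref{bdf-1a}: since $h$ is conformal on $\mathbb D$ (after the usual passage to $h_\rho(z)=(1/\rho)h(\rho z)$ and letting $\rho\to1^-$, exactly as in the Remark preceding the proof of Theorem \ref{AAP1prop3}) with $h(\mathbb D)=D$ hyperbolic and $d(0,\partial D)=|a|$ for the nearest boundary point $a$, Theorem \ref{AAP1th3a} applies verbatim and gives $\tfrac1{16.5}\le|a|<1$; in particular $\tfrac1{16}\le d(0,\partial D)\le1$, which is (a slightly weakened form of) \eqref{bdf-1a}. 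The left inequality can also be read off directly from the right side of \eqref{bda} in Theorem \ref{AAP1th2}, applied to the normalized function $g(z)=z(a-h(z))/a\in\mathcal F$ as in the proof of Theorem \ref{AAP1th3a}, and the bound $|a|<1$ is the standard fact that $\lambda_D(w)\,d(w,\partial D)\le1$ together with $\lambda_D(h(0))=1/h'(0)=1$.

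Next I would prove the two-sided estimate \eqref{bdf}. The lower bound $\tfrac1{16}(1-|\mu(z)|)\le d(f(z),\partial\Omega)$ is already contained in the left half of \eqref{eq3.1} combined with the left half of \eqref{Koe}: from \eqref{eq3.1} we have $\tfrac1{16}(1-|\mu(z)|)\le d(f(z),\partial\Omega)\lambda_D(h(z))$, and from \eqref{Koe} applied to the simply connected domain $D$ we have $\lambda_D(h(z))\le 4/d(h(z),\partial D)$... but that is not quite what we want. Instead I would argue as follows: since $\Omega=f(\mathbb D)$ is simply connected, the left side of \eqref{eq3.2} gives $\lambda_\Omega(f(z))\le 4/d(f(z),\partial\Omega)$, and since $\lambda_\Omega$ is the hyperbolic density of $\Omega$ which by the subordination $f(\mathbb D)=\Omega$ and monotonicity of the hyperbolic metric dominates... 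Actually the cleanest route is: by the Schwarz–Pick-type comparison, because $h\prec$ (a covering of) a domain containing $D$ is not available; rather use directly that \eqref{eq3.1} says $\tfrac1{16}(1-|\mu(z)|)\le d(f(z),\partial\Omega)\lambda_D(h(z))$, and combine with the elementary universal bound $\lambda_D(h(z))\,d(h(z),\partial D)\le1$ (valid for every hyperbolic, hence every simply connected, $D$) to cancel $\lambda_D(h(z))$ — but this time one needs the inequality in the other direction. The correct pairing is: $d(f(z),\partial\Omega)\ge \tfrac1{16}(1-|\mu(z)|)\lambda_D(h(z))^{-1}$ is false; I must use the genuine Koebe lower bound for the simply connected $D$, namely $\lambda_D(h(z))\,d(h(z),\partial D)\ge\tfrac14$ from \eqref{Koe}, which gives $\lambda_D(h(z))\ge \tfrac1{4\,d(h(z),\partial D)}$, and feeding this into the left half of \eqref{eq3.1} yields $d(f(z),\partial\Omega)\ge \tfrac1{16}(1-|\mu(z)|)/\lambda_D(h(z))$ — still the wrong direction. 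At this point the honest statement is that the lower bound in \eqref{bdf} is exactly the lower bound already recorded in \eqref{eq3.4}: $\tfrac1{16}(1-|\mu(z)|)\le d(f(z),\partial\Omega)/d(h(z),\partial D)$, hence $\tfrac1{16}(1-|\mu(z)|)\,d(h(z),\partial D)\le d(f(z),\partial\Omega)$; since $d(h(z),\partial D)$ can be small one does not immediately get the clean form, so the intended reading of \eqref{bdf} is the normalized one and the lower bound is literally the left side of \eqref{eq3.4}.

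For the upper bound $d(f(z),\partial\Omega)\le 2\,d(h(z),\partial D)$, the key new input is \eqref{bdf-1a}, or rather its local analogue. I would apply the Remark/Lemma \ref{AAP1lem2} machinery after an automorphism of $\mathbb D$: fix $z_0\in\mathbb D$, precompose $f$ with a Möbius automorphism sending $0$ to $z_0$ and postcompose with the translation and scaling normalizing the analytic part of the resulting harmonic map; the distances $d(f(z_0),\partial\Omega)$ and $d(h(z_0),\partial D)$ transform by the same conformal factors up to the dilatation correction, so the inequality $d(f(z),\partial\Omega)\le 2\,d(h(z),\partial D)$ reduces to the base-point case. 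In the base-point case, $\Omega$ is simply connected so $d(f(0),\partial\Omega)=d(0,\partial\Omega)$ and one compares with $d(0,\partial D)=|a|$; the inequality $|F_n(w)|\le 2|w|$-type estimate from the proof of Theorem \ref{AAP1prop2} — namely $|f(z)-h(z)|=|\overline{g(z)}|\le$ integral of $|\varphi|$ along a radius $\le |h(z)|$ near the relevant boundary point — shows that the omitted value realizing $d(h(0),\partial D)=|a|$ for $h$ is moved by at most $|a|$ under passing from $h$ to $f$, so $d(f(0),\partial\Omega)\le 2|a|=2\,d(h(0),\partial D)$. The main obstacle I anticipate is making the transfer-to-the-base-point step rigorous: one must verify that under the automorphism precomposition the harmonic map $f\circ\sigma$ (with $\sigma\in\operatorname{Aut}(\mathbb D)$, $\sigma(0)=z_0$) again has the form required, that its analytic part after renormalization is conformal with a hyperbolic image, and that the chain rule for $d(\cdot,\partial\cdot)$ under the normalizing affine map together with the bound $|f(w)-h(w)|\le|h(w)|$ (which uses $a=0$ or the device $r<|a|/2$ exactly as in Theorem \ref{AAP1prop2}'s proof) does give the clean factor $2$. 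Once that reduction is in place, everything else is a direct citation of Theorem \ref{AAP1th3a}, \eqref{eq3.1}, and \eqref{Koe}.
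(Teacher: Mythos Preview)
Your treatment of \eqref{bdf-1a} and of the left inequality in \eqref{bdf} is essentially fine: the paper simply cites Theorem~\ref{AAP1th2} for the first, and for the second it cites Duren's book \cite{PD2} (exactly the left half of \eqref{eq3.4}), so after your detours you land in the right place.

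Your approach to the upper bound $d(f(z),\partial\Omega)\le 2\,d(h(z),\partial D)$, however, has a genuine gap and is far more complicated than needed. The claim that ``the omitted value realizing $d(h(0),\partial D)=|a|$ for $h$ is moved by at most $|a|$ under passing from $h$ to $f$'' is not justified: $a$ is a \emph{boundary} point of $D$, not a value $h(z_0)$ for some $z_0\in\mathbb D$, so the pointwise estimate $|f(z)-h(z)|=|g(z)|$ does not apply to it. To make sense of ``moving $a$'' you would need a sequence $z_n\to\partial\mathbb D$ with $h(z_n)\to a$ and $|g(z_n)|$ controlled, and you do not supply one. The $|F_n(w)|\le 2|w|$ estimate from Theorem~\ref{AAP1prop2} lives on a small disk $N(a,r)\subset D$ where a local inverse of $h$ exists; it says nothing about boundary behavior. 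The M\"obius reduction to the base point is also only sketched and would require checking that the renormalized analytic part stays in $\mathcal F$ with hyperbolic image.

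The paper's argument bypasses all of this with a direct length comparison: fix $z$, take the straight segment $\gamma=[h(z),b)\subset D$ to the nearest boundary point $b\in\partial D$, lift it via a branch of $h^{-1}$ (available because $h$ is conformal) to a curve $\Gamma\subset\mathbb D$ ending on $\partial\mathbb D$, and observe that $f(\Gamma)$ joins $f(z)$ to $\partial\Omega$. Then
\[
d(f(z),\partial\Omega)\le\int_\Gamma\bigl(|h'|+|g'|\bigr)\,|d\zeta|\le 2\int_\Gamma|h'|\,|d\zeta|=2\,|h(z)-b|=2\,d(h(z),\partial D).
\]
This path-lifting idea is the missing ingredient; once you see it, no automorphism reduction or boundary-value argument is needed.
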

\begin{proof}
The inequalities \eqref{bdf-1a} have already appeared in Theorem \ref{AAP1th2}.
To show the inequalities (\ref{bdf}), we fix $z\in{\mathbb{D}}$ and let
$b\in\partial D$ be nearest to $h(z)$ and $\gamma=[h(z),b)\subset D$ be the
line segment from $h(z)$ to $b$. As $b$ is accessible from inside $D$, $b $
is a radial limit for $h$. The function $h$ being conformal assures a branch
of $\Gamma=h^{-1}(\gamma)$ connecting $z$ to $\partial{\mathbb{D}}$. Now, we
find that
\begin{align*}
d(f(z),\partial\Omega) & \leq\int\limits_{\Gamma} \left\vert \frac{ \partial
f}{\partial\zeta}d\zeta+\frac{\partial f}{\partial\overline{\zeta}}d%
\overline{\zeta}\right\vert \\
& \leq\int\limits_{\Gamma}\left( |h^{\prime}(\zeta)|+|g^{\prime}
(\zeta)|\right) |d\zeta| \\
& \leq2\int\limits_{\Gamma}|h^{\prime}(\zeta)|\, |d\zeta|=2d(h(z),\partial D)
\end{align*}
which proves the right-hand inequality of \eqref{bdf} The left-hand
inequality of (\ref{bdf}) is a known estimate (see \cite{PD2}).
\end{proof}

%
%

\subsection*{Acknowledgment.}

This work was supported by the American University of Sharjah and by a research university grant from Universiti Sains Malaysia.
The third author is currently at Indian Statistical Institute (ISI), Chennai Centre, Chennai, India.
The research of the third author was supported by the project RUS/RFBR/P-163 under
Department of Science \& Technology (India).

\subsection*{Conflict of Interests}
The authors declare that there is no conflict of interests regarding the publication of this paper.

\end{document}